\documentclass[12pt]{article} 
\newcommand\useplain[1]{#1}

\usepackage{amsthm}
\usepackage{fullpage}
\newtheorem{theorem}{Theorem}
\newtheorem{lemma}[theorem]{Lemma}

\usepackage{amssymb, amsmath}
\usepackage{hyperref}
\usepackage{graphicx}

\newcommand\ex{\ensuremath{\mathrm{ex}}}
\newtheorem{cl}{Claim}
\newtheorem{claim}[cl]{Claim}

\useplain{\title{On the Tur\'an number of forests}}

\begin{document}

\author{
Bernard Lidick\'{y}
\thanks{Department of Mathematical Sciences,
University of Illinois at Urbana-Champaign, Urbana, Illinois 61801, USA {\tt lidicky@illinois.edu}.}
\and
Hong Liu
\thanks{Department of Mathematical Sciences,
University of Illinois at Urbana-Champaign, Urbana, Illinois 61801, USA {\tt hliu36@illinois.edu}.}
\and
Cory Palmer
\thanks{Department of Mathematical Sciences,
University of Illinois at Urbana-Champaign, Urbana, Illinois 61801, USA {\tt ctpalmer@illinois.edu} Research partially supported by OTKA NK 78439.}
}

\maketitle

\begin{abstract}
The Tur\'an number of a graph $H$, $\ex(n,H)$, is the maximum number of edges in a graph on $n$ vertices which does not have $H$ as a subgraph.
We determine the Tur\'an number and find the unique extremal graph for forests consisting of paths when $n$
is sufficiently large.
This generalizes a result of Bushaw and Kettle [%
Combinatorics, Probability and Computing 20:837--853, 2011].
We also determine the Tur\'an number and extremal graphs for forests consisting of stars of arbitrary order.
\end{abstract}

\section{Introduction}

Notation in this paper is standard. For a graph $G$ let $E(G)$ be the set of edges and $V(G)$ be the set of vertices.
The \emph{order} of a graph is the number of vertices. The number of edges of $G$ is denoted $e(G) = |E(G)|$.
For a graph $G$ with subgraph $H$, the graph $G-H$ is the induced subgraph on vertex set $V(G)\setminus V(H)$ i.e. $G[V(G)\setminus V(H)]$.
For $U \subset V(G)$ we define $N(U)$ to be the set of vertices in $V(G)\setminus U$ that have a neighbor in $U$. While the \emph{common neighborhood} of $U \subset V(G)$ is
the set of vertices in $V(G) \setminus U$ that are adjacent to every vertex in $U$. We denote the degree of a vertex $v$ by $d(v)$ and the minimum degree in a graph by $\delta(G)$.
A \emph{universal vertex} in $G$ is a vertex that is adjacent to all other vertices in $G$.
A \emph{star forest} is a forest whose connected components are stars and a \emph{linear forest} is a forest whose connected components are paths.
A path on $k$ vertices is denoted $P_k$ and a star with $k+1$ vertices is denoted $S_k$. Superscript is used to denote the index of a particular graph in a set of graphs.
Let $k \cdot H$ denote the graph of the disjoint union of $k$ copies of the graph $H$.

The \emph{Tur\'an number}, $\ex(n,H)$, of a graph $H$ is the maximum number of edges in a graph on $n$ vertices which does not contain $H$ as a subgraph. The problem of determining Tur\'an numbers is one of the cornerstones of graph theory. The traditional starting point of extremal graph theory is a theorem of Mantel (see e.g. \cite{bib-bollobas-MGT}) that the maximum number of edges in a triangle-free graph is $\lfloor \frac{n^2}{4} \rfloor$. Tur\'an \cite{Turan41,Turan54} generalized this result to find the extremal graph of any complete graph. In particular he showed that $\ex(n,K_{r}) = \left(\frac{r-2}{r-1}\right)\cdot\frac{n^2}{2}$.  The Erd\H{o}s-Stone-Simonovits Theorem \cite{ErSt46,ErSim66} states that asymptotically Tur\'an's construction is best-possible for any $r$-chromatic graph $H$ (as long as $r>2$). More precisely $\ex(n,H) = \left(\frac{r-2}{r-1}\right)\cdot\frac{n^2}{2} + o(n^2)$.
Thus when $H$ is bipartite the Erd\H{o}s-Stone-Simonovits only states that $\ex(n,H) = o(n^2)$. The Tur\'an number of bipartite graphs includes many classical theorems. For example
for complete bipartite graphs K\H{o}v\'ari-S\'os-Tur\'an Theorem \cite{K-S-T1954} gives $\ex(n,K_{s,t}) = O(n^{2-1/s})$ (also see e.g. \cite{ErReSos66,Brown66,Fu96}) and for even cycles Bondy and Simonovits \cite{Bondy1974} have $\ex(n,C_{2k}) \leq n^{1+1/k}$.

In 1959, Erd\H{o}s and Gallai~\cite{E-G} determined the Tur\'an number for paths. We state the theorem here as it is an important tool in our proofs.

\begin{theorem}[\cite{E-G}]\label{E-G}
For any $k, n>1$, $\ex(n,P_{k})\le \frac{k-2}{2}n$, where equality holds for the graph of disjoint copies of $K_{k-1}$.
\end{theorem}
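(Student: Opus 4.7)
The plan is to prove the inequality by induction on $n$; the case $n = 1$ is immediate. If $G$ is disconnected, write $G = G_1 \sqcup \cdots \sqcup G_c$ with $|V(G_i)| = n_i$. Each $G_i$ is $P_k$-free, so applying the induction hypothesis to each component and summing gives $e(G) \le \sum_{i=1}^c \frac{(k-2)n_i}{2} = \frac{(k-2)n}{2}$. It therefore suffices to treat the connected case. If $n \le k-1$, then $e(G) \le \binom{n}{2} \le \frac{n(k-2)}{2}$; so we may assume $G$ is connected with $n \ge k$.

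The goal now is to exhibit a vertex of degree at most $(k-2)/2$: removing such a vertex $v$ and applying the induction hypothesis to $G - v$ yields $e(G) \le e(G - v) + d(v) \le \frac{(k-2)(n-1)}{2} + \frac{k-2}{2} = \frac{(k-2)n}{2}$, closing the induction. To find such a vertex, fix a longest path $P = v_0 v_1 \cdots v_\ell$ of $G$. Since $G$ is $P_k$-free, $\ell \le k-2$, and by maximality every neighbor of $v_0$ (respectively $v_\ell$) lies on $P$.

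The key structural claim is that no index $i \in \{1, \ldots, \ell\}$ satisfies both $v_0 v_i \in E(G)$ and $v_\ell v_{i-1} \in E(G)$. Indeed, if such an $i$ existed, then $v_0, v_1, \ldots, v_{i-1}, v_\ell, v_{\ell-1}, \ldots, v_i, v_0$ would trace out a cycle $C$ with vertex set $V(P)$. Since $G$ is connected and $n > \ell + 1$, some $u \notin V(P)$ would be adjacent to a vertex $v_j$ of $C$, and concatenating the edge $u v_j$ with a full traversal of $C$ starting at $v_j$ would yield a path on $\ell + 2$ vertices, contradicting the maximality of $P$. From this condition, $\{i : v_0 v_i \in E(G)\}$ and $\{i : v_\ell v_{i-1} \in E(G)\}$ are disjoint subsets of $\{1, \ldots, \ell\}$, so $d(v_0) + d(v_\ell) \le \ell \le k-2$ and at least one endpoint of $P$ has degree at most $(k-2)/2$, as required.

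The main obstacle is the rotation-style argument of the third paragraph: the rest of the proof is routine induction and bookkeeping, but constructing the spanning cycle $C$ from the assumed chord configuration and then using the connectivity of $G$ to extend it beyond $P$ is the structural heart of the bound. Equality (when $(k-1) \mid n$) is realized by $\frac{n}{k-1}$ disjoint copies of $K_{k-1}$, which is trivially $P_k$-free and has exactly $\frac{n}{k-1} \cdot \binom{k-1}{2} = \frac{(k-2)n}{2}$ edges.
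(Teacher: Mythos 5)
The paper does not prove this statement: it is quoted as Theorem~1 directly from Erd\H{o}s and Gallai's 1959 paper and used as a black box, so there is no in-paper proof to compare against. Judged on its own, your argument is correct and is the standard modern proof of the Erd\H{o}s--Gallai path bound: induction on $n$, reduction to the connected case, and the rotation-type observation that the two chord-index sets at the ends of a longest path are disjoint, forcing an endpoint of degree at most $(k-2)/2$. All the steps check out, including the use of connectivity to extend the Hamiltonian cycle of $V(P)$ to a longer path. The only spot worth a remark is the degenerate case $\ell=1$, $i=1$, where the ``cycle'' $C$ collapses to the single edge $v_0v_1$ traversed twice rather than a genuine cycle; the conclusion you need (from any vertex of $V(P)$ one can traverse all of $V(P)$, so an outside neighbor yields a path on $\ell+2$ vertices) still holds there, but a sentence acknowledging this would make the claim airtight. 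The equality discussion correctly exhibits the extremal configuration when $(k-1)\mid n$.
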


A well-known conjecture of Erd\H{o}s and S\'os \cite{Er46} states that the Tur\'an number for paths is enough for any tree i.e. a graph $G$ on $n$ vertices and more than $\frac{k-2}{2}n$ edges contains any tree on $k$ vertices. A proof of the Erd\H{o}s-S\'os Conjecture for large trees was announced by Ajtai, Koml\'os, Simonovits and Szemer\'edi.

A natural extension of the problem is the determination of the Tur\'an number of forests. Erd\H{o}s and Gallai \cite{E-G} considered the graph $H$ consisting of $k$ independent edges (note that $H$ is a linear forest) and found $\ex(n,H) = \max\{{k-1 \choose 2}+(k-1)(n-k+1),{2k-1 \choose k}\}$. When $n$ is large enough compared to $k$, the extremal graph attaining this bound is obtained by adding $k-1$ universal vertices to an independent set of $n-k+1$ vertices.
This construction clearly does not contain $k$ independent edges as every such edge must include at least one of the universal vertices.
This construction forms a model for the constructions presented throughout the paper. Brandt \cite{Brandt94} generalized the above result by proving that any graph $G$ with  $e(G) > \max\{{k-1 \choose 2}+(k-1)(n-k+1),{2k-1 \choose k}\}$ contains every forest on $k$ edges without isolated vertices.

Recently, Bushaw and Kettle~\cite{B-K} found the Tur\'an number and extremal graph for the linear forest with
components of the same order $l>2$. When $l=3$ this proves a conjecture of Gorgol \cite{Gorgol2011}. We generalize this theorem by finding the Tur\'an number and extremal graph for arbitrary linear forests.

\begin{theorem}\label{thm:paths}
Let $F$ be a linear forest with components of order $v_1,v_2,\dots,v_k$.
If at least one $v_i$ is not $3$, then for $n$ sufficiently large,
$$\ex(n,F)=\left(\sum_{i=1}^k \left\lfloor \frac{v_i}{2} \right\rfloor -1\right)\left(n-\sum_{i=1}^k\left\lfloor \frac{v_i}{2} \right\rfloor+1\right)+{\sum_{i=1}^k\left\lfloor \frac{v_i}{2} \right\rfloor-1\choose 2}+c,$$
where $c=1$ if all $v_i$ are odd and $c=0$ otherwise. Moreover, the extremal graph is unique.
\end{theorem}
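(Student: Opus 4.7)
The proof consists of constructing an $F$-free graph achieving the bound and showing no $F$-free graph has more edges. Set $s := \sum_{i=1}^{k}\lfloor v_i/2\rfloor$. For the construction I would take $G^{\ast} := K_{s-1} + \overline{K_{n-s+1}}$, the graph obtained by joining $s-1$ universal vertices to an independent set of size $n-s+1$; when every $v_i$ is odd I would add one extra edge inside the independent set. A direct edge count gives the claimed value. To verify that $G^{\ast}$ (with or without the augmenting edge) is $F$-free, write $U$ and $I$ for the universal and independent parts. Every edge meets $U$ except possibly the augmenting one, so any path $P_v \subseteq G^{\ast}$ must contain at least $\lfloor v/2\rfloor$ vertices of $U$, because consecutive path vertices cannot both lie in $I$; a parity argument shows the augmenting edge yields no saving precisely when $v$ is odd. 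Summing over components, embedding $F$ would require at least $s$ vertices of $U$, contradicting $|U|=s-1$.

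For the upper bound I would induct on $n$ (with $F$ fixed and $n$ at least a large threshold $n_0$). Let $f(n)$ denote the claimed expression and observe the identity $f(n)-f(n-1)=s-1$. Assume $G$ is $F$-free on $n$ vertices with $e(G) > f(n)$. If $\delta(G)\leq s-1$, delete a minimum-degree vertex $v$: the resulting $G-v$ has $n-1$ vertices and $e(G-v)\geq e(G)-(s-1)>f(n-1)$, so by induction $F\subseteq G-v\subseteq G$, a contradiction. Therefore I may assume $\delta(G)\geq s$.

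The case $\delta(G)\geq s$ is the heart of the argument and the main obstacle. The bound $e(G)>f(n)$ is roughly $(s-1)n$, which via Theorem~\ref{E-G} only supplies a path of order about $2s$, while the number of vertices needed to splice out $F$ from a single long path is $\sum v_i + k - 1 = 2s + q + k - 1$ (with $q$ the number of odd $v_i$), strictly larger than $2s$ outside the trivial case $(q,k)=(0,1)$. My plan is therefore to assemble $F$ component by component. Using $\delta(G)\geq s$ together with Erd\H{o}s--Gallai, extract the longest component $P_{v_1}$ first by a P\'osa-style rotation of a maximum path: the endpoints each have at least $s$ neighbors, which either lengthens the path or supplies many candidate endpoints whose neighborhoods cover the remaining vertices of the component. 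After removing $V(P_{v_1})$, verify that the residual graph on $n-v_1$ vertices still exceeds the extremal threshold for $F':=P_{v_2}\cup\cdots\cup P_{v_k}$, re-apply the minimum-degree reduction if necessary, and iterate. The delicate point is bounding the edge loss at each deletion, since $v_i$ vertices could in principle kill up to $v_i\cdot\Delta(G)$ edges; this forces one to exploit that a near-extremal $G$ is structurally close to $G^{\ast}$, so almost all edges meet a distinguished set of at most $s-1$ high-degree vertices and a size-$v_i$ deletion actually costs only about $v_i(s-1)$ edges.

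For the uniqueness statement I would trace equality through the induction: if $e(G)=f(n)$ and $G\not\cong G^{\ast}$, then the low-minimum-degree case cannot have been invoked with slack, so $\delta(G)=s-1$ and a stability refinement of the component-by-component extraction in the high-minimum-degree case forces $G$ to coincide with $G^{\ast}$, augmented by exactly one edge in the independent set precisely when every $v_i$ is odd.
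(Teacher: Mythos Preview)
Your construction and the verification that $G^\ast$ is $F$-free are correct, and the minimum-degree reduction is standard. The gap is exactly where you flag it as ``delicate'': after extracting a copy of $P_{v_1}$ and deleting its $v_1$ vertices, you need the residual graph to exceed the threshold for $F'=F\setminus P_{v_1}$, i.e.\ you need the edge loss to be at most about $\lfloor v_1/2\rfloor\, n$. But deleting $v_1$ vertices can cost up to $v_1(n-1)$ edges, and nothing in your setup prevents the extracted path from passing through several vertices of degree $\Theta(n)$. Your proposed remedy, that a near-extremal $G$ is ``structurally close to $G^\ast$'', is precisely the conclusion you are trying to reach, so invoking it here is circular; and even if one had a stability statement available, it would not by itself control which high-degree vertices a particular $P_{v_1}$ uses. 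The uniqueness argument inherits the same problem.

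The paper sidesteps this by never deleting a full path. The key lemma (their Claim~1) shows that any copy of $P_{v_j}$ in an extremal $G$ contains a set $U_j$ of $\lfloor v_j/2\rfloor$ vertices with $\Omega(n)$ common neighbours. Because $U_j$ can be greedily extended to a $P_{v_j}$ avoiding any prescribed small set, the graph $G-U_j$ must already be $F'$-free. Now the arithmetic closes exactly: removing $\lfloor v_j/2\rfloor$ vertices costs at most the number of edges incident to $\lfloor v_j/2\rfloor$ universal vertices, which equals $e(G_F(n))-e(G_{F'}(n-\lfloor v_j/2\rfloor))$, so $e(G-U_j)\ge e(G_{F'})$ with equality forcing $U_j$ universal and (by induction on $k$, not on $n$) $G-U_j\cong G_{F'}$. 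The base case $k=2$ requires a separate, substantially more intricate argument. The idea you are missing, then, is to delete the ``half-path'' $U_j$ rather than the whole $P_{v_j}$; this is what makes the edge budget balance and what drives both the bound and the uniqueness.
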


Notice that the theorem avoids the case of linear forest with every component of order three.
This case was solved by Bushaw and Kettle~\cite{B-K}. We will also solve it as a special case
of a star forest handled by Theorem~\ref{thm:stars} as $P_3$ is the star with two leaves $S_2$.
We prove Theorem \ref{thm:paths} in Section \ref{sec:linear} and describe the unique $F$-free graph on $n$ vertices with $\ex(n,F)$ edges.

Another motivation is the following conjecture of Goldberg and Magdon-Ismail~\cite{GoldbergM11}:
\begin{center}
\begin{minipage}{35em}
\emph{
Let $F$ be a forest with $k$ components, then every graph $G$ with at least $e(F) + k$ vertices and average
degree $> e(F) - 1$ contains $F$ as a subgraph.}
\end{minipage}
\end{center}
This is a natural generalization of
the Erd\H{o}s-S\'{o}s Conjecture, however it is false. 
From Theorem~\ref{thm:paths}, some simple calculation shows that the conjecture fails for linear
forests with at least two components of even order.

We also investigate the other extreme case when each component is a star and we determine the Tur\'an number find all the extremal graphs.

\begin{theorem}\label{thm:stars}
Let $F=\bigcup_{i=1}^k S^i$ be a star forest where $d_i$ is the maximum degree of $S^i$ and
 $d_1\ge d_2\ge \cdots\ge d_k$.
For $n$ sufficiently large, the Tur\'an number for $F$ is
$$\ex(n,F)=\max_{1\le i\le k}\left\{(i-1)(n-i+1)+{i-1\choose 2}+\left\lfloor\frac{d_i-1}{2}(n-i+1)\right\rfloor\right\}.$$
\end{theorem}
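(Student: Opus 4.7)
For each $i \in \{1, \ldots, k\}$ I would exhibit the graph $G_i$ obtained by taking a clique $K_{i-1}$, a $(d_i - 1)$-regular graph $H_i$ on $n - i + 1$ fresh vertices (with a single missing edge when $(d_i - 1)(n - i + 1)$ is odd, so that $H_i$ realizes $\lfloor (d_i - 1)(n - i + 1)/2 \rfloor$ edges), and all $(i - 1)(n - i + 1)$ edges across the partition; then $e(G_i)$ equals the $i$-th term of the claimed maximum by direct calculation. To show $F \not\subseteq G_i$, I would suppose an embedding $\phi : V(F) \hookrightarrow V(G_i)$ exists; let $a$ be the number of stars whose centers lie in $V(K_{i-1})$ and $J$ the set of indices whose center lies in $V(H_i)$, so $|J| = k - a$. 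Each center in $H_i$ has only $d_i - 1$ neighbors inside $H_i$, hence $S^j$ for $j \in J$ requires at least $\max(d_j - d_i + 1, 0)$ leaves inside $V(K_{i-1})$. Pigeonhole yields $|J \cap \{1, \ldots, i\}| \ge i - a$, and each such $j$ satisfies $d_j \ge d_i$ and thus contributes at least $1$; so the total $K_{i-1}$-leaves demanded by stars in $J$ is at least $i - a$. But only $i - 1 - a$ vertices of $K_{i-1}$ remain available after the $a$ centers, a contradiction.

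\textbf{Upper bound via induction on $k$.} The base case $k = 1$ is the classical $\ex(n, S^1) = \lfloor (d_1 - 1) n / 2 \rfloor$. For the inductive step I would let $G$ be an $F$-free graph on $n$ (large) vertices, fix a constant threshold $T := d_1 + |V(F)|$, and set $H := \{v \in V(G) : d_G(v) \ge T\}$ with $s := |H|$. A greedy embedding shows $s \le k - 1$, since any $k$ vertices of degree $\ge T$ can be picked as the centers of $F$, each enjoying at least $T - |V(F)| + 1 \ge d_1$ free neighbors at every stage. When $s \ge 1$, the same embedding idea forces $G[V \setminus H]$ to be $F''$-free for $F'' := S^{s+1} \cup \cdots \cup S^k$, for otherwise the $s$ vertices of $H$ would supply the centers of $S^1, \ldots, S^s$ and complete $F$. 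I would then apply the induction hypothesis to $F''$ on $n - s$ vertices (which has $k - s < k$ components), obtaining a bound in terms of some $i' \in \{1, \ldots, k - s\}$, and add the trivial $\binom{s}{2} + s(n - s)$ upper bound on edges incident to $H$. A direct identity (with $i := s + i'$) shows this sum equals exactly the $i$-th term of the formula for $F$ on $n$ vertices.

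\textbf{The case $s = 0$ and the main obstacle.} In this case $\Delta(G) < T$. If additionally $\Delta(G) < d_1$, the handshake bound $e(G) \le \lfloor(d_1 - 1)n/2\rfloor$ yields the $i = 1$ term. Otherwise I would pick a vertex $v$ of maximum degree, embed an $S^1$ at $v$ using any $d_1$ neighbors $L_1$ as leaves, and observe that $G - v - L_1$ is $F'$-free with $F' := S^2 \cup \cdots \cup S^k$; induction on $F'$ applies, and the $O(1)$ residual $(d_1 + 1)T$ edges incident to $\{v\} \cup L_1$ are absorbed by the $\Theta(n)$ surplus that appears when the $i'$-th term for $F'$ on $n - d_1 - 1$ vertices is re-expressed as the $(i' + 1)$-th term for $F$ on $n$ vertices; this is exactly where the hypothesis ``$n$ sufficiently large'' is used. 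The main obstacle is bookkeeping: confirming the Case II identity that collapses the induction plus the $\binom{s}{2} + s(n - s)$ contribution into the $(s + i')$-th term of the formula, and the Case I estimate showing that the peel cost is dominated for $n$ large. The remaining structure of the argument, stripping a small high-degree core and reducing to a sub-forest on the complement, is standard for Tur\'an problems on forests.
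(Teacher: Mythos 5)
Your proposal is correct, and its upper bound takes a genuinely different route from the paper. The paper first identifies the index $i$ maximizing the formula, proves a monotonicity claim for the leading coefficients $f_i=i-1+\frac{d_i-1}{2}$, and then splits into three cases ($i=k$, $1<i<k$, $i=1$): in each it embeds $S^1\cup\cdots\cup S^{i-1}$ in $G$, extracts a linear-degree vertex from each star by counting edges between that star and its complement, and shows that deleting these $i-1$ vertices leaves an $(S^i\cup\cdots\cup S^k)$-free graph whose Tur\'an number it computes separately. You instead define the core $H$ by the absolute degree threshold $T=d_1+|V(F)|$, prove $|H|=s\le k-1$ and that $G-H$ is $(S^{s+1}\cup\cdots\cup S^k)$-free by one greedy embedding, and close with the identity
\[
\binom{s}{2}+s(n-s)+(i'-1)(n-i+1)+\binom{i'-1}{2}=(i-1)(n-i+1)+\binom{i-1}{2},\qquad i=s+i',
\]
which I have verified (both sides match via $\binom{s+i'-1}{2}=\binom{s}{2}+\binom{i'-1}{2}+s(i'-1)$, and the floor terms coincide since $(n-s)-i'+1=n-i+1$). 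This eliminates the paper's case analysis and its claims about the $f_j$ entirely; the correct index of the maximum emerges automatically as $s+i'$, and your $s=0$ peeling step with its $\Theta(n)$ surplus is also sound. What the paper's route buys in exchange is structure: it shows $G\subseteq F(n,i)$ and hence characterizes the extremal graphs, whereas your additive bound $\binom{s}{2}+s(n-s)$ plus induction yields only the edge count (which is all the stated theorem requires). One cosmetic point: when $(d_i-1)(n-i+1)$ is odd there is no $(d_i-1)$-regular graph to delete an edge from, and deleting an edge would lose a full edge rather than the half needed to hit the floor; the correct description is a graph with one vertex of degree $d_i-2$ and all others of degree $d_i-1$, as in the paper's reference to Simonovits.
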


Note that for a single component, Theorem \ref{thm:stars} describes the Tur\'an number of a star i.e. the maximum number of edges in a graph of fixed maximum degree.
We prove Theorem~\ref{thm:stars} and characterize all extremal graphs in Section~\ref{sec:stars}. The last section contains a result about forests with small components.
In the proofs of Theorem \ref{thm:paths} and Theorem \ref{thm:stars} we make no attempt to minimize the bound on $n$.

\section{Linear forests}\label{sec:linear}

We first consider the Tur\'an problem for a linear forest. Throughout this section, unless otherwise specified, $F$ is a linear forest i.e. $F=\bigcup_{i=1}^k P^i$, such that $P^i$ is a path on $v_i$ vertices and $v_1\ge v_2\ge \dots \ge v_k\ge 2$.

Bushaw and Kettle \cite{B-K} determined the Tur\'an number and extremal graph for forests of paths of the same order.

\begin{theorem}[\cite{B-K}]\label{disjoint-union-path}
Let $F$ be a linear forest such that each component has order $\ell$.
For $n$ large enough if $\ell=3$, then
$$\ex(n,k\cdot P_3)={k-1\choose 2}+(n-k+1)(k-1)+\left\lfloor\frac{n-k+1}{2}\right\rfloor.$$
if $\ell\geq 4$, then
$$\ex(n, k\cdot P_{\ell})={k\left\lfloor\frac{\ell}{2}\right\rfloor-1 \choose 2}+\left(k\left\lfloor\frac{\ell}{2}\right\rfloor-1\right)\left(n-k\left\lfloor\frac{\ell}{2}\right\rfloor+1\right)+c,$$
where $c=1$ if $\ell$ is odd, and $c=0$ if $\ell$ is even.
\end{theorem}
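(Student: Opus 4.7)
The plan is to establish matching lower and upper bounds and then address uniqueness. For the lower bound, I exhibit an explicit extremal graph. Setting $s := k\lfloor \ell/2 \rfloor - 1$, for $\ell \geq 4$ let $G_0$ be the graph obtained by taking a clique on $s$ vertices and joining each of its vertices to every vertex of a further independent set of size $n - s$, plus one extra edge inside the independent set when $\ell$ is odd. For $\ell = 3$, let $G_0$ be the join of $K_{k-1}$ with $n-k+1$ independent vertices together with a maximum matching on the independent part. A direct count gives the claimed numbers of edges. To verify $k \cdot P_\ell \not\subseteq G_0$, observe that the ``outside'' of the clique is independent up to at most one edge (or a matching when $\ell = 3$), so in any embedded $P_\ell$ the outside vertices can only appear at alternating positions; consequently each $P_\ell$ consumes at least $\lfloor \ell/2 \rfloor$ vertices from the clique side (for $\ell = 3$ every $P_3$ must use at least one clique vertex), and $k$ disjoint copies would require $k \lfloor \ell/2 \rfloor > s$ such vertices.

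For the upper bound, I argue by induction on $k$ with Theorem~\ref{E-G} as the workhorse. Suppose $G$ on $n$ vertices has more edges than the displayed formula. Since that bound exceeds $(\ell-2)n/2$ for $n$ large and $k \geq 2$, Theorem~\ref{E-G} already provides a copy of $P_\ell$ in $G$. I then delete this copy and apply the inductive hypothesis to the remaining graph $G'$ on $n - \ell$ vertices with parameter $k - 1$. The difference $\ex(n, k \cdot P_\ell) - \ex(n - \ell, (k-1)\cdot P_\ell)$ grows like $\lfloor \ell/2 \rfloor \cdot n$, leaving a budget of roughly $\lfloor \ell/2 \rfloor \cdot n$ edges that the deleted $P_\ell$ may be incident to.

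The main obstacle is producing such a \emph{cheap} copy of $P_\ell$. My approach is to split on the minimum degree of $G$. If some vertex $v$ has small degree (say $d(v) \leq s$), then $G - v$ still exceeds the formula for parameters $(k, n-1)$, so a secondary induction on $n$ at fixed $k$ reduces to a smaller instance. Otherwise $\delta(G)$ is large enough that standard path-connectivity arguments produce many vertex-disjoint $P_\ell$'s, and an averaging argument then yields a $P_\ell$ whose vertex-degree sum is only of order $\ell \cdot 2e(G)/n$, which is well within the budget. For uniqueness, I would trace the equality cases through each step of the induction: the extremal graph must contain a clique of size exactly $s$ dominating an otherwise almost-independent set, forcing isomorphism with $G_0$. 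The finicky part throughout is the parity constant $c$ and the special $\ell = 3$ case, where the construction admits a whole matching on the independent side; this case requires a separate check to reproduce the extra $\lfloor (n-k+1)/2 \rfloor$ term.
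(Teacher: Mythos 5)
First, note that the paper does not prove this statement at all: it is quoted from Bushaw and Kettle \cite{B-K}, and the paper's own contribution (Theorem~\ref{thm:paths}, via Lemma~\ref{large-minimum-degree}) is a generalization proved by a different, structural method. So the comparison below is between your sketch and the method that both \cite{B-K} and this paper actually use.

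Your lower bound construction and the verification that it is $k\cdot P_\ell$-free are correct. The genuine gap is in the upper bound, specifically in the inductive step where you delete a ``cheap'' copy of $P_\ell$ and count edges. The budget $\ex(n,k\cdot P_\ell)-\ex(n-\ell,(k-1)\cdot P_\ell)$ is $\left\lfloor \ell/2\right\rfloor n+O(1)$, and your claim that in the large-minimum-degree case an averaging argument produces a $P_\ell$ with degree sum of order $\ell\cdot 2e(G)/n=O(1)$ is false. The averaging would require $\Omega(n)$ vertex-disjoint copies of $P_\ell$, but a $k\cdot P_\ell$-free graph contains at most $k-1$ of them; and in any near-extremal graph (e.g.\ $G_0$ plus one edge) \emph{every} copy of $P_\ell$ must pass through at least about $\left\lfloor \ell/2\right\rfloor$ of the $s$ dominating vertices, each of degree roughly $n$. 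Hence every $P_\ell$ is incident to at least about $\left\lfloor \ell/2\right\rfloor n$ edges --- exactly your budget --- so the first-order count never closes and the induction stalls. This is precisely why the actual proofs do not delete the whole path: the edge count between a copy of $P_\ell$ and the rest of $G$ is used only to locate, inside that path, a set $U$ of $\left\lfloor \ell/2\right\rfloor$ vertices whose \emph{common} neighborhood has linear size (Claim~\ref{claim-large-neighbor} here, Lemma~2.3 in \cite{B-K}). One then deletes only $U$, observes that $G-U$ must be $(k-1)\cdot P_\ell$-free because $U$ extends greedily to a $P_\ell$ avoiding any prescribed $\ell(k-1)$ vertices, and runs the induction \emph{structurally} on $G-U$ rather than by comparing edge counts. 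Your minimum-degree reduction (the secondary induction on $n$) is sound and matches the paper's, but without the common-neighborhood lemma the main step does not go through; the uniqueness claim inherits the same problem, and the $\ell=3$ case (where each $P_3$ needs only one dominating vertex and the outside carries a full matching) needs a genuinely separate argument, as the paper does via Theorem~\ref{thm:stars} with $P_3=S_2$.
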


\begin{figure}[ht]
\begin{center}
\useplain{\includegraphics{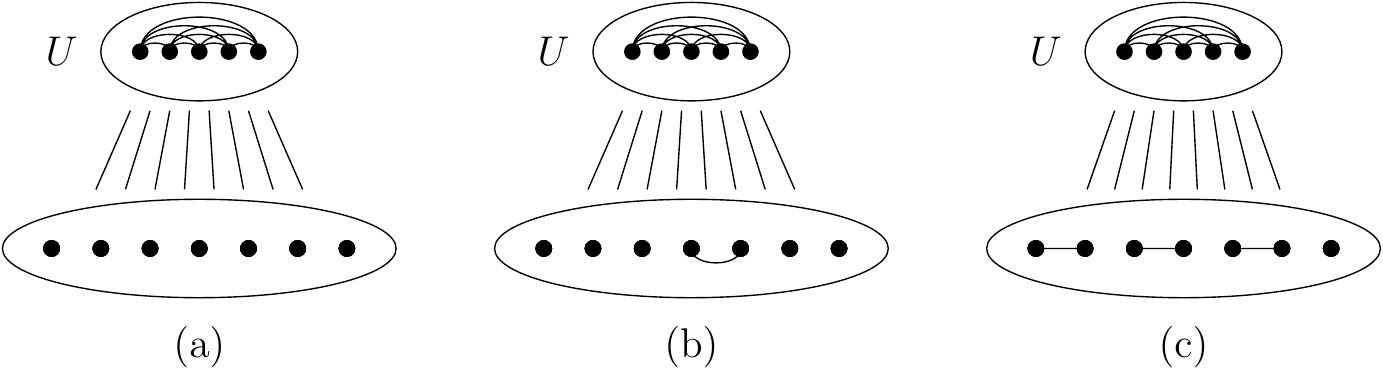}}
\end{center}
\caption{$G_F(n)$, (a) is the case where at least one path $F$ is of even order, (b) is the case where all paths in $F$ are of odd order. Figure (c) is the extremal graph for $k \cdot P_3$.}
\label{fig-gf}
\end{figure}

The extremal graph for $k$ copies of $P_3$ is a set of $k-1$ universal vertices and a maximal matching among the $n-k+1$ remaining vertices. See Figure~\ref{fig-gf}(c). We note that that this is the extremal graph for $k\cdot S_2$ in Section~\ref{sec:stars}.
The extremal graph for longer paths is in Figure~\ref{fig-gf}(a) and (b) with $k\left\lfloor\frac{l}{2}\right\rfloor-1$ universal vertices.

Let $F$ be a linear forest where at least one $v_i$ is not $3$. Define $G_F(n)$ to be the graph on $n$ vertices with a set
$U$ of $\left( \sum_{i=1}^k \lfloor v_i/2\rfloor \right) -1$ universal vertices together with a single edge in $G_F(n) - U$ if each $v_i$ is odd or $n-|U|$ independent vertices otherwise (see Figure~\ref{fig-gf}).
Observe that $G_F(n)$ is $F$-free. Indeed, any path $P^i$ in $G_F(n)$ uses at least $\lfloor v_i/2\rfloor$ vertices from $U$, and $|U|<\sum_i \lfloor v_i/2\rfloor$.
When every component is the same, then $G_F(n)$ is exactly the extremal graph given by Bushaw and Kettle \cite{B-K}.
We show that $G_F(n)$ is the unique extremal graph for a linear forest $F$.

First, let us consider the base case when $F$ consists of only two paths and by Theorem~\ref{disjoint-union-path} we may assume they are of different lengths.

\begin{theorem}\label{2-paths}
Suppose $F=P_a\cup P_b$, with $a>b\ge 2$. Let $G$ be any $F$-free $n$-vertex graph with $n\ge {a\choose a/2}^2a^4b^4$. Then $e(G)\le e(G_F(n))$, with equality only when $G\simeq G_F(n)$.
\end{theorem}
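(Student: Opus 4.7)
Let $s = \lfloor a/2 \rfloor + \lfloor b/2 \rfloor - 1$, the number of universal vertices in $G_F(n)$. The plan is to show that any $F$-free graph $G$ with $e(G) \geq e(G_F(n))$ has a set $U$ of $s$ essentially universal vertices, and that $G - U$ contains at most $c$ edges, matching the structure of $G_F(n)$.

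The key combinatorial tool is an embedding lemma: if $v_1,\dots,v_{s+1}$ are vertices of $G$ whose common neighborhood $N(v_1)\cap\cdots\cap N(v_{s+1})$ has size at least $\lceil a/2\rceil + \lceil b/2\rceil$, then $G\supseteq F$. The proof is direct interleaving: use $\lfloor a/2\rfloor$ of the $v_i$'s alternating with $\lceil a/2\rceil$ common neighbors to form $P_a = w_1 v_{i_1} w_2 v_{i_2}\cdots$, and use the remaining $(s+1)-\lfloor a/2\rfloor = \lfloor b/2\rfloor$ of the $v_i$'s with $\lceil b/2\rceil$ fresh common neighbors to build a vertex-disjoint $P_b$ in the same way. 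Every consecutive pair is an edge since each $w$ lies in every $N(v_i)$.

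Using this lemma, I would define $U = \{v \in V(G) : d(v) \geq n - T\}$ for a threshold $T = T(a,b)$ chosen so that any $s+1$ vertices of $U$ share a common neighborhood of size at least $n - (s+1)T > \lceil a/2\rceil + \lceil b/2\rceil$ for $n$ sufficiently large. The embedding lemma then forces $|U| \leq s$. Since $F \subseteq P_{a+b}$, any $F$-free graph is $P_{a+b}$-free, so Theorem~\ref{E-G} applied to $G[V\setminus U]$ gives a linear bound on $e(G[V\setminus U])$; in fact $G[V\setminus U]$ is $P_r$-free for some small constant $r$, since any sufficiently long path in $V\setminus U$ can be extended through vertices of $U$ as "connectors" to embed $F$. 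A double-counting argument, combining these bounds with the assumption $e(G) \geq e(G_F(n))$, should then pin down $|U| = s$ and force $e(G[V\setminus U]) = O(1)$.

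The final and hardest step is to argue that $U$ consists of genuinely universal vertices and that $e(G[V\setminus U]) \leq c$. If some $u \in U$ is non-adjacent to some $x \in V\setminus U$, then $x$ has almost all of $U\setminus\{u\}$ as neighbors, and one can try to substitute $x$ for $u$ in the embedding, the slack provided by an edge in $V\setminus U$. A case analysis on the parity of the $v_i$'s is then needed: when some $v_i$ is even, any edge in $V\setminus U$ combined with the $U$-connectors embeds $F$, forcing zero edges outside $U$; when all $v_i$ are odd, precisely one such edge is allowed, matching $c=1$. The main obstacle is precisely this last structural analysis: quantifying the "slack" afforded by each kind of non-adjacency in $U$ or extra edge in $V\setminus U$, and cleanly handling the parity dichotomy to obtain uniqueness of $G_F(n)$ as the extremal graph.
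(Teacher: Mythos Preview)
Your approach is genuinely different from the paper's, and it has a real gap at the step where you claim $|U|=s$.

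You define $U=\{v:d(v)\ge n-T\}$ and correctly observe that the embedding lemma forces $|U|\le s$. The problem is the \emph{lower} bound $|U|\ge s$. Your proposed route is to bound $e(G[V\setminus U])$ tightly enough that $|U|<s$ contradicts $e(G)\ge e(G_F(n))$. But your bound on $e(G[V\setminus U])$ comes from the claim that $G[V\setminus U]$ is $P_r$-free for some small $r$, ``since any sufficiently long path in $V\setminus U$ can be extended through vertices of $U$ as connectors.'' This is circular: it presupposes that $U$ already contains enough connectors. If $|U|=0$ you have no connectors at all; the only a priori bound is that $G$ is $P_{a+b}$-free, giving $e(G)\le\frac{a+b-2}{2}n$, and since $\frac{a+b-2}{2}\ge\lfloor a/2\rfloor+\lfloor b/2\rfloor-1=s$ (with strict inequality whenever $a$ or $b$ is odd), this does not contradict $e(G)\ge sn-O(1)$. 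So nothing you have written rules out $|U|$ being small or even empty.

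The paper circumvents this entirely. It first runs a minimum-degree reduction to assume $\delta(G)\ge s$; this is essential later for connectivity and for forcing endpoints of maximal paths to see $U$. Then, rather than defining $U$ by a degree threshold, it locates $U_a$ \emph{inside a copy of $P_a$} (which exists because $e(G)>\ex(n,P_a)$): a counting argument shows that $\Omega(n)$ vertices of $G-P_a$ have at least $\lfloor a/2\rfloor$ neighbours on $P_a$, and pigeonhole over the $\binom{a}{\lfloor a/2\rfloor}$ possible $\lfloor a/2\rfloor$-subsets gives some $U_a$ with $\Omega(n)$ common neighbours. This guarantees the existence of the ``connectors'' directly, with no bootstrapping. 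Then $G-U_a$ is $P_b$-free, and a second application yields $U_b$; the structural endgame (at most one edge outside $U_a\cup U_b$, with the parity dichotomy) is carried out using the minimum-degree hypothesis and several short path-extension claims. Your sketch of the endgame is in the right spirit, but you would also need the minimum-degree reduction or some substitute to make those extensions go through.
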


We use a standard trick to reduce the problem to graphs with large minimum degree.

\begin{lemma}\label{large-minimum-degree}
Suppose $F=P_a\cup P_b$, with $a>b\ge 2$. Let $G$ be any $F$-free $n$-vertex graph with $n\ge {a\choose a/2}a^2b^2$ and $\delta(G)\ge \lfloor a/2\rfloor+\lfloor b/2\rfloor-1$. Then $e(G)\le e(G_F(n))$, with equality only when $G\simeq G_F(n)$.
\end{lemma}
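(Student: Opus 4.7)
The plan is to split $G$ as $P \cup H$ where $P$ is a well-chosen copy of $P_a$ in $G$ and $H := G - V(P)$, then bound the edges inside each part and across the cut.

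First I would reduce to the case that $G$ contains a copy of $P_a$. If $G$ is $P_a$-free, Theorem~\ref{E-G} gives $e(G) \le \frac{a-2}{2}n$. Since $\lfloor a/2 \rfloor + \lfloor b/2 \rfloor - 1 \ge \frac{a-2}{2} + \frac{1}{2}$ (using $b \ge 2$), a direct computation shows $\frac{a-2}{2}n < e(G_F(n))$ for $n$ sufficiently large, so the bound holds with strict inequality and the uniqueness statement is vacuous.

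Now assume $G$ contains $P = v_1 v_2 \cdots v_a$ and set $H := G - V(P)$. Since $G$ is $F$-free and $V(P)$ already supports a $P_a$, the subgraph $H$ contains no $P_b$; by Theorem~\ref{E-G}, $e(H) \le \frac{b-2}{2}(n-a)$. Writing $e(G) = e(G[V(P)]) + e(V(P), V(H)) + e(H) \le \binom{a}{2} + e(V(P), V(H)) + \frac{b-2}{2}(n-a)$, the remaining task reduces to a tight bound on the cut $e(V(P), V(H))$.

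Let $s := \lfloor a/2\rfloor + \lfloor b/2\rfloor - 1$. The target is $e(V(P), V(H)) \le s(n-a) + O_{a,b}(1)$. I would prove this by a P\'osa-type rotation argument combined with a well-chosen copy $P$. The idea is that for each $w \in V(H)$ with $w \sim v_i$, the paths $w v_i v_{i+1} \cdots v_a$ and $v_1 v_2 \cdots v_i w$ can be extended (the minimum degree hypothesis $\delta(G)\ge s$ guarantees enough adjacencies to do so) into alternative copies of $P_a$ in $G$; each such alternative $P_a$ has a complement that must be $P_b$-free, which severely restricts the neighborhoods of $w$ and of the endpoints $v_1,v_a$. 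By choosing $P$ to optimize an appropriate quantity over all copies of $P_a$ (such as $d(v_1)+d(v_a)$ or the number of cut edges) and iterating the rotations, one shows that only $s$ of the $a$ vertices of $V(P)$ can be heavily connected to $V(H)$, while the remaining $a-s$ contribute only $O_{a,b}(1)$ cross-edges in total.

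Combining the three bounds yields $e(G) \le e(G_F(n))$ for $n$ large, with equality forcing every inequality in the chain to be tight. The equality analysis then pins down: $H$ is edgeless if some $v_i$ is even, or edgeless plus exactly one edge if both $a,b$ are odd (matching $c=1$); exactly $s$ vertices of $V(P)$ are universal to $V(H)$, while the remaining $a-s$ have no cross-edges; and the edges inside $V(P)$ complete the $G_F(n)$ pattern. These conditions together identify $G \cong G_F(n)$. The hardest part is executing the rotation argument with enough precision: a naive implementation gives only $e(G) \le e(G_F(n)) + O_{a,b}(1)$, and closing this additive constant gap, so that the lower-order terms $\binom{s}{2}+c$ in $e(G_F(n))$ are recovered exactly, is the technical heart of the proof.
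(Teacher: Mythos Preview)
Your edge-counting decomposition has an arithmetic gap that does not close. With $s=\lfloor a/2\rfloor+\lfloor b/2\rfloor-1$ we have $e(G_F(n))=sn-\binom{s+1}{2}+c$, so the leading coefficient is $s$. But your three bounds sum to
\[
\binom{a}{2}+s(n-a)+\frac{b-2}{2}(n-a)+O_{a,b}(1)=\left(s+\frac{b-2}{2}\right)n+O_{a,b}(1),
\]
which exceeds $e(G_F(n))$ by $\frac{b-2}{2}\,n+O(1)$ whenever $b\ge 3$. The problem is not the rotation argument or the constant; it is that the Erd\H{o}s--Gallai bound $e(H)\le\frac{b-2}{2}(n-a)$ is linear in $n$, and no bound on the cut alone can absorb that. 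Your own target $e(V(P),V(H))\le s(n-a)+O(1)$ is already the trivial upper bound when $s$ vertices are universal to $H$, so it cannot be strengthened to $\big(s-\frac{b-2}{2}\big)(n-a)$ without additional structure on $H$. In short, the decomposition $e(G[V(P)])+e(V(P),V(H))+e(H)$ with the pieces bounded separately cannot reach $e(G_F(n))$ except in the degenerate case $b=2$.

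The paper does not count edges this way. Instead it first extracts from any copy of $P_a$ a set $U_a$ of $\lfloor a/2\rfloor$ vertices with common neighbourhood of size $\Omega(n)$ (a pigeonhole on the cut, which you set up but then abandon), observes that $G-U_a$ is $P_b$-free, and for $b\ge 4$ repeats the trick inside $G-U_a$ to find $U_b$ of size $\lfloor b/2\rfloor-1$. With $U=U_a\cup U_b$ of size exactly $s$, a short case analysis (using the minimum degree and the large common neighbourhood to embed $P_a$ and $P_b$ greedily) shows that $G-U$ has at most one edge, and none unless both $a,b$ are odd. This gives $G\subseteq G_F(n)$ directly, which is a structural statement strictly stronger than the edge bound and from which both $e(G)\le e(G_F(n))$ and the uniqueness of the extremal graph follow immediately. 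If you want to salvage your outline, the missing idea is precisely this: pass from ``$s$ vertices of $P$ are heavily connected to $H$'' to ``$s$ vertices have a large \emph{common} neighbourhood'', and then argue that $G$ minus those $s$ vertices is (nearly) edgeless rather than merely $P_b$-free.
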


We first show how Lemma~\ref{large-minimum-degree} implies Theorem~\ref{2-paths} and we give the proof of the lemma afterwards.

\begin{proof}[Proof of Theorem~\ref{2-paths} using Lemma~\ref{large-minimum-degree}.]
Suppose $G$ is an extremal graph for $F$ on $n>{a\choose a/2}^2a^4b^4$ vertices and $e(G)\ge e(G_F(n))$.
We start by removing vertices of small degree from $G$.
Suppose that there exists a vertex $v$ in $G$ with  $d(v)<\delta(G_F(n))=\lfloor a/2\rfloor+\lfloor b/2\rfloor-1$.
Let $G^n = G$.  We then define $G^{n-1} = G^n - \{v\}$.  
We keep constructing $G^{i-1}$ from $G^i$ by removing
a vertex of degree less then $\delta(G_F(i))$. 
The process continues while $\delta(G^i)<\delta(G_F(i))$.
Notice that
\[e(G^{i-1}) - e(G_F(i-1)) \geq e(G^{i}) - e(G_F(i)) + 1.\]
Hence the process terminates after $n-\ell$ steps. We get $G^{\ell}$ with $\delta(G^{\ell})\ge \delta(G_F(\ell))$ and
\[{\ell\choose 2}\ge e(G^{\ell})\ge e(G_F(\ell))+n-\ell=(\lfloor a/2\rfloor+\lfloor b/2\rfloor-1)\ell+n-\ell+O(a^2b^2),\]
this implies $\ell>\sqrt{2n}\ge {a\choose a/2}a^2b^2$. Since $e(G^{\ell})>e(G_F(\ell))$, Lemma~\ref{large-minimum-degree} then implies $F\subseteq G^{\ell}\subseteq G$, a contradiction.
\end{proof}

\begin{proof}[Proof of Lemma~\ref{large-minimum-degree}.]
Let $G$ be an extremal graph for $F$ with $\delta(G)\ge \left\lfloor a/2\right\rfloor+\lfloor b/2\rfloor-1$. First we show that there is a set $U_a$ of $\left\lfloor a/2\right\rfloor$ vertices
in which all the vertices share a large common neighborhood, in particular more than $a+b$ common neighbors.
Such a $U_a$ can be easily extended to a copy of $P_a$, which implies $G-U_a$ must be $P_b$-free.
However, $G-U_a$ is not $P_{b-2}$-free and hence we can find
a  set $U_b$ of  $\lfloor b/2\rfloor-1$ vertices that share a large common neighborhood.
Then we show that $G-U_a-U_b$ has at most one edge, which then implies $G \subseteq G_F(n)$.

Because $e(G)\ge e(G_F(n))>\ex(n,P_a)$ we have $P_b \subseteq P_a\subseteq G$. The following claim is a special case of Lemma 2.3 in \cite{B-K}. We include a proof for the sake of completeness.

\begin{cl}\label{claim-large-neighbor}
Let $P_x$ be a path such that $x =a$ or $x=b$.
Every $P_x$ in $G$ contains at least $\left\lfloor\frac{x}{2}\right\rfloor$ vertices with common neighborhood larger than $\frac{1}{x{x\choose \lfloor x/2\rfloor}}n$.
\end{cl}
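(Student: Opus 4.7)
The plan is to double-count the edges between $V(P_x)$ and its complement, leveraging the key observation that removing $V(P_x)$ from $G$ leaves a graph avoiding the other path of $F$.

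First, let $y$ denote the element of $\{a,b\}$ distinct from $x$. I would observe that $G - V(P_x)$ must be $P_y$-free: otherwise a copy of $P_y$ in $G - V(P_x)$, together with the given $P_x$, would form $F$ in $G$, contradicting the $F$-freeness of $G$. The Erd\H{o}s--Gallai theorem (Theorem~\ref{E-G}) then gives
\[ e(G - V(P_x)) \leq \frac{y-2}{2}(n - x). \]

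Next, letting $B$ denote the number of edges of $G$ with exactly one endpoint in $V(P_x)$, I would decompose $e(G) = e(G[V(P_x)]) + B + e(G - V(P_x))$ and combine this with $e(G) \geq e(G_F(n))$ (established in the line preceding the claim), the trivial bound $e(G[V(P_x)]) \leq \binom{x}{2}$, and the Erd\H{o}s--Gallai bound above to obtain a linear lower bound $B \geq \alpha n - C$ with $\alpha > 0$ and $C$ depending only on $a$ and $b$. Plugging in the explicit formula for $e(G_F(n))$ from Theorem~\ref{thm:paths} makes this a routine calculation.

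The final step is pigeonhole. Let $T := \{v \in V(G) \setminus V(P_x) : |N(v) \cap V(P_x)| \geq \lfloor x/2 \rfloor\}$. Since each vertex in $V(G) \setminus (V(P_x) \cup T)$ contributes at most $\lfloor x/2 \rfloor - 1$ to $B$, the inequality $B \leq |T| \cdot x + (n - x - |T|)(\lfloor x/2 \rfloor - 1)$ combined with the lower bound on $B$ forces $|T|$ to grow linearly in $n$. Each $v \in T$ is a common neighbor of at least one $\lfloor x/2 \rfloor$-subset $S \subseteq V(P_x)$, namely any such subset of $N(v) \cap V(P_x)$, so
\[ \sum_{S \subseteq V(P_x),\ |S| = \lfloor x/2 \rfloor} |N(S) \cap T| \geq |T|. \]
Averaging over the $\binom{x}{\lfloor x/2 \rfloor}$ possible subsets $S$ produces one with $|N(S)| \geq |T|/\binom{x}{\lfloor x/2 \rfloor}$, which for $n$ large enough strictly exceeds $n/(x \binom{x}{\lfloor x/2 \rfloor})$, as required.

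The main obstacle I anticipate is the exact arithmetic of the linear lower bound on $|T|$: the coefficient $\alpha$ above depends on the parities of $a$ and $b$, and in the worst case (both $a$ and $b$ odd) it is only $\lfloor x/2 \rfloor - \tfrac{1}{2}$, a little weaker than one would like. Verifying that the hypothesis $n \geq {a \choose a/2}\, a^2 b^2$ of Lemma~\ref{large-minimum-degree} absorbs all the lower-order error terms and still delivers the strict inequality $|T|/\binom{x}{\lfloor x/2 \rfloor} > n/(x \binom{x}{\lfloor x/2 \rfloor})$ in every parity case will require careful bookkeeping---possibly exploiting the extra $c = 1$ edge in $G_F(n)$ when both $a,b$ are odd (see Theorem~\ref{thm:paths}) to recover the needed slack.
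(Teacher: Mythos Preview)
Your proposal is correct and follows essentially the same route as the paper: the paper also observes that $G-V(P_x)$ is $P_y$-free, applies Erd\H{o}s--Gallai to bound $e(G-V(P_x))$, lower-bounds the cross-edges using $e(G)\ge e(G_F(n))$, counts vertices with at least $\lfloor x/2\rfloor$ neighbours in $V(P_x)$ (your set $T$, their $n_0$), and pigeonholes over the $\binom{x}{\lfloor x/2\rfloor}$ subsets. The paper sidesteps your arithmetic worry by writing the lower bound on $n_0$ as $\frac{n/2+O(a^2b^2)}{a-\lfloor a/2\rfloor+1}$ and invoking the standing hypothesis $n\ge\binom{a}{a/2}a^2b^2$, so your anticipated bookkeeping is indeed routine.
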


\begin{proof}
For simplicity we prove for $P_x = P_a$ and note that the proof for $P_x = P_b$ is the same (we do not use the fact that $a>b$).
The graph $G$ does not contain the forest $P_a \cup P_b$, so the graph $G - P_a$ is $P_b$-free. Thus $e(G - P_a) \leq \frac{b-2}{2}(n-a)$ by Erd\H os-Gallai (Theorem \ref{E-G}). Because $e(G) \geq e(G_F(n)$ the number of edges between $P_a$ and $G-P_a$ is at least 
$e(G_F(n)) - {a \choose 2} - \frac{b-2}{2}(n-a)$.

Let $n_0$ be the number of vertices in $G-P_a$ with at least $\lfloor \frac{a}{2} \rfloor$ neighbors in $P_a$. Therefore the number of edges between $P_a$
and $G-P_a$ is at most $n_0a + (n-a-n_0)(\lfloor \frac{a}{2} \rfloor -1)$. So
\[ e(G_F(n)) - {a \choose 2} - \frac{b-2}{2}(n-a) \leq n_0a + (n-a-n_0)\left(\left\lfloor \frac{a}{2} \right\rfloor -1\right) .\]

Substituting the value of $e(G_F(n))$ and solving for $n_0$ we get
\[n_0 \geq \frac{n/2+O(a^2b^2)}{ a-\lfloor\frac{a}{2}\rfloor+1 }.\]
There are ${a \choose \lfloor \frac{a}{2} \rfloor}$ sets of vertices of order $\lfloor \frac{a}{2} \rfloor$ in $P_a$. Thus there is some set with a common neighborhood of order at least 
\[\frac{n_0}{{a \choose \lfloor \frac{a}{2} \rfloor}} \geq \frac{n}{2a {a \choose \lfloor \frac{a}{2} \rfloor}}>a+b.\]
\end{proof}

As $G$ is not $P_a$-free, let $U_a$ be a set of vertices of large common neighborhood with $|U_a|=\lfloor \frac{a}{2}\rfloor$
whose existence is guaranteed by Claim~\ref{claim-large-neighbor}.

Observe that $G-U_a$ is $P_b$-free. Indeed, suppose there is a copy of $P_b$ in $G-U_a$ and $U_a$ can be extended to $P_a$ avoiding vertices of $P_b$, since $U_a$ has more than $a+b$ common neighbors.
Thus $F\subseteq G$ which is a contradiction.

We now distinguish three cases based on the value of $b$.
If $b=2$, then $G-U_a$ is $P_2$-free, namely $G-U_a$ is empty. Thus $G\subseteq G_F(n)$.

Suppose that $b=3$. Hence $G-U_a$ contains only isolated edges and vertices as it does not contain a copy of $P_3$. If $uv$ is an edge in $G-U_a$ then
both $u$ and $v$ have a neighbor in $U_a$ due by the minimum degree $\delta(G) \geq \lfloor a/2 \rfloor \geq 2$. Furthermore, at most one of $u$ and $v$ is not adjacent to all vertices of $U_a$, otherwise a graph
obtained from $G$ by removing the edge $uv$ and adding all edges between $\{u,v\}$ and $U_a$ has more edges
while it is still $F$-free, which contradicts the extremality of $G$. Let $z$ be a vertex in $U_a$. When $a$ is even, then $U_a\setminus z$ together with an edge in $G-U_a$ complete a copy of $P_a$, and $N(z)\cup \{z\}$ contains a $P_3=P_b$, see Figure~\ref{fig-b3}(a), thus $F\subseteq G$, contradiction. So $G-U_a$ is empty if $a$ is even. When $a$ is odd, then two edges in $G-U_a$ together with $U_a\setminus z$ complete a copy of $P_a$, and again $N(z)\cup \{z\}$ contains a $P_b$, see Figure~\ref{fig-b3}(b), thus $G-U_a$ has at most one edge. Therefore $G\subseteq G_F(n)$.

\useplain{\begin{figure}[ht]}
\begin{center}
\includegraphics{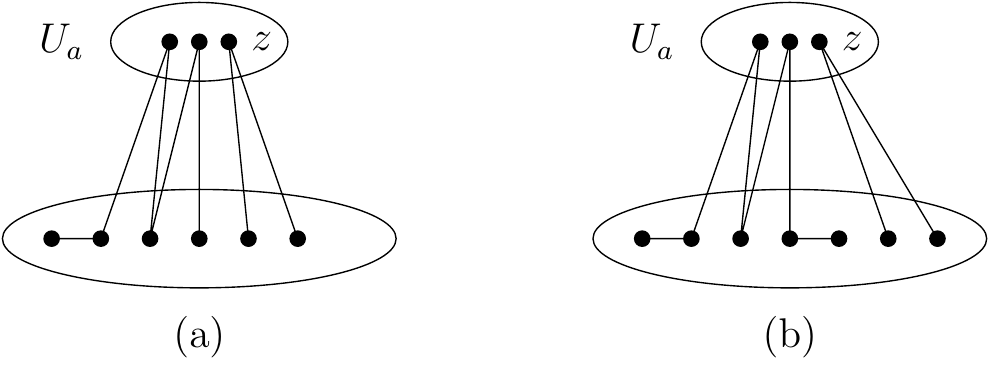}
\end{center}
\caption{Cases for $b=3$.}
\label{fig-b3}
\end{figure}

Now we may assume $b \geq 4$.
Let $|V(G-U_a)|=n'$. Denote by $G'$ the graph obtained by deleting $\lfloor a/2\rfloor$ universal vertices from $G_F(n)$, clearly $|V(G')|=n'$.

If $G-U_a$ is $P_{b-2}$-free, then $e(G-U_a)\le (\frac{b}{2}-2)n'<(\lfloor \frac{b}{2}\rfloor-1)n'-{\lfloor\frac{b}{2}\rfloor\choose 2}=e(G')$, which implies $e(G)<e(G_F(n))$ which is a contradiction.
Thus we may assume $G-U_a$ is not $P_{b-2}$-free. Take a maximum path, $P$, in $G-U_a$ and let $u$ be an end vertex. Since $G-U_a$ is $P_b$-free, $P$ has at most $b-1$ vertices. Then
$$d(u)-(|V(P)|-1)\ge \delta(G)-b+2\ge \lfloor a/2\rfloor+\lfloor b/2\rfloor-1-b+2=\lfloor a/2\rfloor-\lceil b/2\rceil+1\ge 1.$$
Thus $u$ has a neighbor, say $w$, not in $P$. Furthermore $w$ must be in $U_a$, since otherwise we could extend $P$ to a longer path in $G-U_a$. Then $P$ with $w$ and a neighbor of $w$ in $G-U_a-P$ form a $P_b$. By Claim~\ref{claim-large-neighbor} there is a set of $\lfloor b/2\rfloor$ vertices of $P_b$ with a common neighborhood of order at least $a+b$. Note that all vertices in this $P_b$ except $w$ are in $G-U_a$. Thus we find a set of vertices in $G-U_a$ of order $\lfloor b/2\rfloor-1$, call it $U_b$, with a large common neighborhood ($U_b$ is nonempty since $b\ge 4$). We are done if we show $G-U_a-U_b$ is empty if one of $a$ and $b$ is even or has at most one edge when they are both odd, as this implies $G\subseteq G_F(n)$.

\begin{claim} $G$ is connected. \end{claim}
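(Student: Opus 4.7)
The plan is to argue by contradiction: assume $G$ has at least two components, and then manufacture a copy of $F = P_a \cup P_b$ inside $G$. The strategy is simple in outline: the edge-count forces a $P_a$ somewhere in $G$, the minimum-degree hypothesis then forces \emph{every} other component to contain a $P_b$, and placing these two paths in distinct components gives the forbidden forest.

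First I would extract a copy of $P_a$ from $G$ using the edge bound alone. Setting $\delta = \lfloor a/2\rfloor+\lfloor b/2\rfloor-1$, the explicit formula for $e(G_F(n))$ gives $e(G_F(n)) = \delta n - O(\delta^2)$, which strictly exceeds $\ex(n,P_a) \le \frac{a-2}{2}n$ from Theorem~\ref{E-G} once $n$ is sufficiently large, because $\delta > \frac{a-2}{2}$ under the current case assumption $b \ge 4$ (which gives $\lfloor b/2 \rfloor \ge 2$). Hence $e(G) \ge e(G_F(n))$ forces $P_a \subseteq G$; fix such a copy and let $C^\ast$ denote the component of $G$ containing it.

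Next, given any other component $C' \ne C^\ast$, every vertex of $C'$ has all of its $G$-neighbors inside $C'$, so $\delta(G[C']) \ge \delta(G) \ge \lfloor a/2 \rfloor + \lfloor b/2 \rfloor - 1$. Invoking the elementary fact that any graph of minimum degree $d$ contains a path on $d+1$ vertices (take a longest path and note that both endpoints must send all their neighbors onto it), the component $C'$ contains a path on $\lfloor a/2 \rfloor + \lfloor b/2 \rfloor$ vertices. Since $a > b$ implies $\lfloor a/2 \rfloor \ge \lceil b/2 \rceil$, this bound is at least $b$, so $P_b \subseteq C'$. Combining $P_a \subseteq C^\ast$ with $P_b \subseteq C'$ — which are automatically vertex-disjoint because they sit in distinct components — produces $F = P_a \cup P_b \subseteq G$, contradicting the $F$-freeness of $G$.

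The only places that require any care are the two parity-based arithmetic inequalities $\delta > \frac{a-2}{2}$ and $\lfloor a/2 \rfloor + \lfloor b/2 \rfloor \ge b$; both follow by case analysis on the parities of $a$ and $b$ using $a > b \ge 4$, and neither constitutes a real obstacle. In particular, there is no need to invoke any deeper long-path theorem than the bare longest-path argument, because the minimum-degree threshold $\delta + 1$ is already at least $b$.
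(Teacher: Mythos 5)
Your proof is correct and follows essentially the same route as the paper: the paper also takes a component avoiding the main structure (there, the set $U_a$), uses the minimum degree bound $\lfloor a/2\rfloor+\lfloor b/2\rfloor-1\ge b-1$ to greedily extract a $P_b$ inside that component, and derives a contradiction with the $P_b$-freeness of $G-U_a$. Your only deviation is re-deriving the copy of $P_a$ from the edge count rather than reusing the previously established facts, which is harmless.
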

Suppose for contradiction that $G$ is not connected. Then there exists
a connected component $C$ in $G$ not containing $U_a$. The minimum degree
$\left\lfloor a/2\right\rfloor+\lfloor b/2\rfloor-1 \geq b-1$ implies that it is possible
to find $P_b$ in $C$ (for example by a greedy algorithm).
It contradicts the claim that $G-U_a$ is $P_b$-free.
\qed

\newcommand\Hu{H - U}

Let $H$ be a graph and let $U \subset V(H)$ such that $U$ has at least $|U|+3$ common neighbors.
Furthermore, let $1 \leq c \leq |U|+1$.

\begin{claim}\label{cl:pt}
Let $v \in N(U)$ be an endpoint of $P_{t}$ in $\Hu$ where $1 \leq t \leq 3$.
Then $H$ contains  a path $P$ of order $2c-2+t$ such that $|P \cap U| = c-1$.
\end{claim}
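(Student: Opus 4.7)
The plan is to build the path $P$ explicitly as a zigzag that leaves the given $P_t$ at its endpoint $v$, enters $U$, and then alternates between $U$ and the common neighborhood of $U$. Write the given path in $H-U$ as $v = v_1, v_2, \ldots, v_t$, and let $W$ denote the common neighborhood of $U$, so $|W| \ge |U|+3$ and $W \cap U = \emptyset$.

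I would construct $P$ as
\[
v_t, v_{t-1}, \ldots, v_2, v_1, u_1, w_1, u_2, w_2, \ldots, u_{c-1}, w_{c-1},
\]
where $u_1, \ldots, u_{c-1}$ are distinct vertices of $U$ and $w_1, \ldots, w_{c-1}$ are distinct vertices of $W \setminus \{v_1, \ldots, v_t\}$. All edges $u_i w_i$ and $w_i u_{i+1}$ exist automatically because every $w_j$ is adjacent to every vertex of $U$; the only nontrivial edge is $v_1 u_1$, which is supplied by the hypothesis $v \in N(U)$: choose $u_1$ to be any neighbor of $v$ in $U$. The resulting walk has $t + 2(c-1) = 2c-2+t$ vertices, and exactly $c-1$ of them lie in $U$, as required.

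It remains to check that the $u_i$ and $w_i$ can be chosen distinct and avoiding the previously used vertices. For the $u_i$'s we need $|U| \ge c-1$, which is given by $c \le |U|+1$. For the $w_i$'s we need $|W \setminus \{v_1,\ldots,v_t\}| \ge c-1$; since $|W| \ge |U|+3 \ge c+2$ and $t \le 3$, this gives $|W \setminus \{v_1,\ldots,v_t\}| \ge c-1$. Because $W$ is disjoint from $U$, no $w_i$ coincides with any $u_j$, so the vertices of $P$ are all distinct and $P$ is indeed a path.

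There is no real obstacle here beyond a careful count; the whole content is that the hypothesis $|W| \ge |U|+3$ with $t \le 3$ leaves exactly enough slack (namely $|U|$) in the common neighborhood to accommodate up to $c-1 \le |U|$ interior vertices $w_i$ after throwing away the at most three vertices of $P_t$, while the hypothesis $c \le |U|+1$ is precisely what lets us choose $c-1$ distinct vertices from $U$ starting with a fixed neighbor of $v$.
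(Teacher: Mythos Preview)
Your proof is correct and follows essentially the same approach as the paper: build the path by starting with the given $P_t$, pass through a neighbor $u_1\in U$ of $v$, and then zigzag greedily between $U$ and the common neighborhood of $U$. Your version is simply more explicit about the counting that guarantees enough distinct vertices are available at each step.
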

Let $u \in U$ be a neighbor of $v$. Let $P'$ be a path in $H$ of order $2c-2$ starting at $u$ and avoiding vertices of $P_t$
and having at most $c-1$ vertices of $U$.
It can be obtained in a greedy way from $U$ and $N(U)\setminus V(P_t)$ by alternating between $U$ and $N(U)\setminus V(P_t)$.
The path $P_t$ can be used for extending $P'$ and finding a path of order $2c-2+t$.
See Figure~\ref{fig-claims}(a) for case $t=3$.
\qed

\useplain{\begin{figure}[ht]}
\begin{center}
\includegraphics{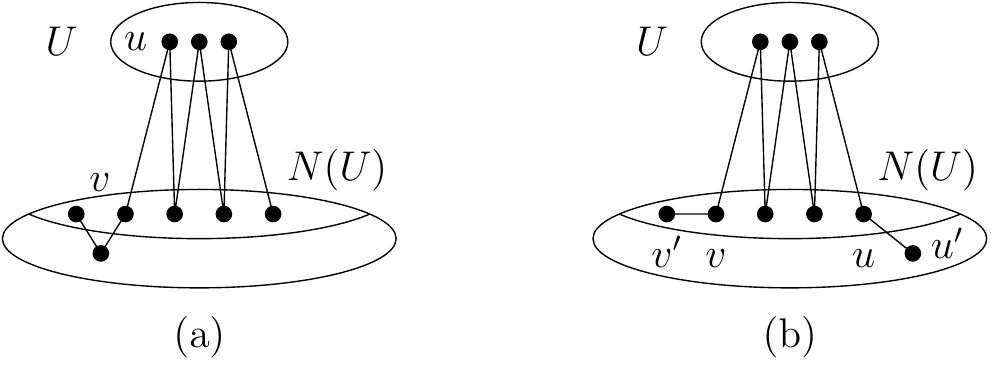}
\end{center}
\caption{Finding a long path.}
\label{fig-claims}
\end{figure}

\begin{claim}\label{cl:11}
If there exist nonadjacent $u,v \in N(U)$ both of degree at least one in  $\Hu$ then $H$ contains a path $P$ of order $2c+1$
such that $|P \cap U| = c-1$.
\end{claim}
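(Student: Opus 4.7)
The strategy is a direct construction. By hypothesis, pick $u' \in N(u) \cap V(\Hu)$ and $v' \in N(v) \cap V(\Hu)$, and aim to build the path
\[
P:\ u' - u - a_1 - b_1 - a_2 - b_2 - \cdots - b_{c-2} - a_{c-1} - v - v',
\]
where $a_1, \dots, a_{c-1}$ are pairwise distinct vertices of $U$ and $b_1, \dots, b_{c-2}$ are pairwise distinct common neighbors of $U$ drawn from $C \setminus \{u, u', v, v'\}$ (with $C$ denoting the common neighborhood of $U$). A count yields $|V(P)| = 2 + (c-1) + (c-2) + 2 = 2c+1$ with exactly the $c-1$ vertices $a_1, \dots, a_{c-1}$ lying in $U$, matching the desired conclusion.

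For the selections, I would pick $a_1 \in N(u) \cap U$ and $a_{c-1} \in N(v) \cap U$, both nonempty because $u, v \in N(U)$, arranged distinct when $c \geq 3$. The interior $U$-vertices $a_2, \dots, a_{c-2}$ are arbitrary distinct elements of $U \setminus \{a_1, a_{c-1}\}$, which is possible since $|U| \geq c-1$. The $b_j$'s are chosen as distinct elements of $C \setminus \{u, u', v, v'\}$; at least $|C| - 4 \geq |U| - 1 \geq c-2$ such vertices are available. Every required edge of $P$ is present: $u'u$ and $vv'$ by hypothesis, $ua_1$ and $a_{c-1}v$ by the choices of $a_1$ and $a_{c-1}$, and each $a_i b_i$ and $b_i a_{i+1}$ because every $b_i \in C$ is adjacent to every vertex of $U$. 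Pairwise distinctness of the chosen vertices follows from the disjointness of the pools together with the nonadjacency of $u$ and $v$, which rules out identifications such as $u' = v$ or $v' = u$.

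The main obstacle is ensuring $a_1 \neq a_{c-1}$ when $c \geq 3$. This is immediate if $u$ or $v$ has more than one $U$-neighbor, or if they have any pair of distinct $U$-neighbors between them; the one delicate subcase is $N(u) \cap U = N(v) \cap U = \{w^*\}$ for a single common vertex $w^*$. In that subcase I would switch strategies and produce a $P_3$ in $\Hu$ with an endpoint in $N(U)$, then invoke Claim~\ref{cl:pt} with $t = 3$ to obtain the desired path of order $2c+1$ with $c-1$ vertices in $U$. Such a $P_3$ is available in each natural sub-situation: if $u' = v'$ then $u - u' - v$ is a $P_3$ with endpoints $u, v \in N(U)$; if $u'$ (or symmetrically $v'$) has an additional neighbor $w$ in $\Hu$ then $w - u' - u$ is a $P_3$ with endpoint $u \in N(U)$; and in the remaining configuration the large common neighborhood $C$, combined with the fact that $w^*$ has many common neighbors with $U$ inside $\Hu$, yields a usable $P_3$ after minor rerouting.
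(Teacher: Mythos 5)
Your argument follows the same route as the paper's: when $u$ and $v$ have a common neighbor in $\Hu$ you fall back on Claim~\ref{cl:pt} with $t=3$, and otherwise you build the alternating path $u'-u-a_1-b_1-\cdots-a_{c-1}-v-v'$ through $U$ and the common neighborhood. To your credit, you have put your finger on exactly the soft spot that the paper's one-line ``greedily obtainable path with end vertices $u$ and $v$ of order $2c-1$'' glosses over, namely that one needs distinct $a_1\in N(u)\cap U$ and $a_{c-1}\in N(v)\cap U$ when $c\ge 3$ (and, similarly, a \emph{common} $U$-neighbor of $u$ and $v$ when $c=2$).

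However, your resolution of the remaining subcase is not a proof, and the step ``yields a usable $P_3$ after minor rerouting'' is where the argument fails. Consider $H$ with $U=\{w_1,w_2\}$, five vertices $n_1,\dots,n_5$ adjacent to both $w_1$ and $w_2$ and to nothing else, and $u,v$ each adjacent to $w_1$ and to a private pendant vertex $u'$, $v'$ respectively. All hypotheses hold with $c=3$: $U$ has $|U|+3=5$ common neighbors, and $u,v\in N(U)$ are nonadjacent with degree one in $\Hu$. Yet $\Hu$ consists of the two disjoint edges $uu'$, $vv'$ and five isolated vertices, so it contains no $P_3$ at all and Claim~\ref{cl:pt} cannot be invoked with $t=3$; moreover the longest path in $H$ has order $6<2c+1=7$ (delete $w_1$ and count). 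So in the configuration you isolate, no rerouting can succeed, because the conclusion itself can fail: the claim is not provable from the stated hypotheses alone. The paper's own proof silently assumes the $u$--$v$ path of order $2c-1$ exists and thus shares this gap; in the paper's applications the offending configuration is excluded by the minimum-degree and connectivity conditions on $G$, which are not available inside the abstract claim. As a standalone proof your proposal is therefore incomplete at precisely the point you flagged, and the missing step cannot be supplied without importing extra hypotheses.
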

If there is a common neighbor of $u$ and $v$ then the claim follows from Claim~\ref{cl:pt} because $u$ is an endpoint of a path of order three.
Let $u'$ and $v'$ be distinct neighbors of $u$ and $v$, respectively, in $\Hu$. A greedily obtainable path with end vertices
$u$ and $v$ avoiding $u'$ and $v'$ of order $2c-1$ can be extended by $u'$ and $v'$ to a path of order $2c+1$.
See Figure~\ref{fig-claims}(b).
\qed

\begin{claim}
There are at most two vertices in $N(U_b) \setminus U_a$ that are not adjacent to all vertices of $U_a$.
\end{claim}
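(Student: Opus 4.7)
The plan is to argue by contradiction: assume there are three vertices $x_1, x_2, x_3 \in N(U_b)\setminus U_a$, each missing at least one edge to some $y_i \in U_a$, and exhibit a copy of $P_b$ inside $G - U_a$, contradicting the already-established fact that $G - U_a$ is $P_b$-free.

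The setup is to put oneself in a position to apply Claim~\ref{cl:pt} and Claim~\ref{cl:11} to the pair $(H, U) = (G - U_a,\, U_b)$ with parameter $c = \lfloor b/2\rfloor$. Two hypotheses need checking. First, the common-neighborhood bound: in $G$ the set $U_b$ has at least $a+b$ common neighbors, and at most $|U_a| = \lfloor a/2\rfloor$ of these lie in $U_a$, leaving at least $b + \lceil a/2\rceil$ common neighbors in $H$, which comfortably exceeds $|U_b| + 3 = \lfloor b/2\rfloor + 2$ for $a>b\ge 4$. Second, each $x_i$ must have a neighbor outside $U_a \cup U_b$: since $x_iy_i \notin E(G)$, at most $(\lfloor a/2\rfloor - 1) + (\lfloor b/2\rfloor - 1)$ of its edges go into $U_a \cup U_b$, which is strictly less than $\delta(G) \ge \lfloor a/2\rfloor + \lfloor b/2\rfloor - 1$; hence $x_i \in N_H(U_b)$ has degree at least one in $H - U_b$, as required by the two claims.

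With the hypotheses secured, the argument splits on the adjacencies among $x_1, x_2, x_3$. If some pair, say $x_1, x_2$, is nonadjacent, then Claim~\ref{cl:11} with $c = \lfloor b/2\rfloor$ produces a path in $H$ of order $2\lfloor b/2\rfloor + 1 \ge b$, giving a $P_b$ in $G - U_a$, the desired contradiction. Otherwise $\{x_1, x_2, x_3\}$ spans a triangle in $H - U_b$, so $x_1 x_2 x_3$ is a $P_3$ in $H - U_b$ with endpoint $x_1 \in N_H(U_b)$; applying Claim~\ref{cl:pt} with $t = 3$ and $c = \lfloor b/2\rfloor$ again yields a path in $H$ of order $2\lfloor b/2\rfloor + 1 \ge b$, a $P_b$ in $G - U_a$, and the same contradiction.

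The one piece of bookkeeping that deserves care, and is the main conceptual point, is insisting that the two auxiliary claims be invoked inside the host $H = G - U_a$ rather than all of $G$: the alternating construction in those claims then pulls common neighbors of $U_b$ from $V(G)\setminus(U_a\cup U_b)$ automatically, so the $P_b$ it produces is forced to live in $G - U_a$. Once this is arranged, the two cases above cover every adjacency pattern on $\{x_1, x_2, x_3\}$ and the contradiction is uniform in the parity of $b$.
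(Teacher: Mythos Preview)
Your proof is correct and follows essentially the same approach as the paper: set $H=G-U_a$, $U=U_b$, $c=\lfloor b/2\rfloor$, use the minimum-degree condition to give each offending vertex a neighbor in $G-U_a-U_b$, and then invoke Claim~\ref{cl:pt} or Claim~\ref{cl:11} to obtain a $P_b$ in $G-U_a$. You are simply more explicit than the paper, spelling out the common-neighborhood check for $U_b$ inside $H$ and the dichotomy (nonadjacent pair versus triangle among $x_1,x_2,x_3$) that the paper compresses into the phrase ``Claim~\ref{cl:pt} or Claim~\ref{cl:11} applies.''
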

Notice that if a vertex is not adjacent to all vertices of $U_a \cup U_b$ then it has degree at least one in $G - U_a - U_b$
because of the minimum degree condition. If there are at least three such vertices, Claim~\ref{cl:pt} or Claim~\ref{cl:11}
applies with $H = G - U_a$, $U_b$ in place of $U$ and $c = \lfloor\frac{b}{2}\rfloor$. It leads to existence
of a path of order $2\lfloor\frac{b}{2}\rfloor + 1$ in $ G - U_a$ which is a contradiction.
\qed

Let $U$ be $U_a \cup U_b$.
The previous claim together with the fact that vertices of $U_b$ have many common neighbors implies that there
is a common neighborhood of $U$ of order at least $\frac{n}{b\binom{b}{b/2}} - 2$.
Hence whenever we find in $G$ a path of order $a$ using at most $\lfloor\frac{a}{2}\rfloor - 1$ vertices of $U$
or a path of order $b$ using at most $\lfloor\frac{b}{2}\rfloor - 1$ vertices of $U$ then we can easily find the
other path of $F$.

\begin{claim}
Every vertex in $G-U$ is adjacent to a vertex of $U$.
\end{claim}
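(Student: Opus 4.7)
The plan is to argue by contradiction. Suppose some $v \in V(G) \setminus U$ has no neighbor in $U$. The hypothesis $\delta(G) \ge \lfloor a/2\rfloor + \lfloor b/2\rfloor - 1 = |U|$ then forces $v$ to have at least $|U| \ge b - 1$ neighbors, all lying inside $G - U$. I would split on whether any neighbor of $v$ lies in $N(U)$.

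If $v$ has a neighbor $w \in N(U)$, pick any other neighbor $w'$ of $v$. Then $w, v, w'$ is a $P_3$ in $G - U$ whose endpoint $w$ lies in $N(U)$, so Claim~\ref{cl:pt} with $t = 3$ and $c = \lfloor a/2\rfloor$ (whose common-neighborhood hypothesis is guaranteed by the bound recorded just above the claim) produces a path in $G$ of order $2\lfloor a/2\rfloor + 1 \ge a$ using exactly $\lfloor a/2\rfloor - 1$ vertices of $U$. Truncating gives a $P_a$ with at most $\lfloor a/2\rfloor - 1$ vertices of $U$, and the observation recorded immediately before the claim then upgrades this to a copy of $F$ in $G$, contradicting $F$-freeness.

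If instead no neighbor of $v$ lies in $N(U)$, I would take a longest path $P = v = x_0, x_1, \ldots, x_k$ in $G - U$ starting at $v$; note $k \ge 1$ and $x_1 \notin N(U)$. If $x_k \notin N(U)$, every edge at $x_k$ stays inside $G - U$, so $x_k$ has degree at least $|U| \ge b - 1$ in $G - U$; maximality of $P$ at $x_k$ then puts all these neighbors in $V(P)$, forcing $|V(P)| \ge b$ and hence $P_b \subseteq G - U \subseteq G - U_a$, which contradicts the earlier observation that $G - U_a$ is $P_b$-free. If $x_k \in N(U)$, then $x_1 \notin N(U)$ forces $k \ge 2$, so $x_{k-2}, x_{k-1}, x_k$ is a $P_3$ in $G - U$ with endpoint $x_k \in N(U)$, and the previous paragraph's argument applies verbatim.

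The real work is the second case, because $v$ itself is outside $N(U)$ and so cannot be fed into Claim~\ref{cl:pt} directly. The key idea is that any greedy extension of a path from $v$ inside $G - U$ must either eventually reach a vertex of $N(U)$ (at which point a shifted $P_3$ at the far end unlocks Claim~\ref{cl:pt}) or dead-end at a vertex whose full minimum-degree quota must be realised inside $G - U$, which already carves out the very $P_b$ that $G - U_a$ is forbidden to contain.
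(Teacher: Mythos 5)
Your proof is correct and follows essentially the same route as the paper: locate a vertex of $N(U)$ that is an endpoint of a $P_3$ in $G-U$ and feed it to Claim~\ref{cl:pt} (you take $c=\lfloor a/2\rfloor$ to build a $P_a$ where the paper builds a $P_b$, but the observation preceding the claim covers either). The only difference is that you make the paper's ``by connectivity and minimum degree it is easy to find'' step fully rigorous via a longest-path argument from $v$, dispatching the dead-end case through the $P_b$-freeness of $G-U_a$ rather than through connectivity.
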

If a vertex $v$ is not adjacent to any vertex of $U$ then by connectivity and minimum degree condition of $G$ it is easy to find
 a vertex $u \in N(U)$ such that $u$ is an end vertex of a path of order 3.
Hence Claim~\ref{cl:pt} applies and gives a path of order $b$
using $\lfloor\frac{b}{2}\rfloor-1$ vertices of $U$ which is a contradiction.
\qed

Hence $N(U)$ are all the vertices of $G - U$.
Claim~\ref{cl:pt} implies that $G - U$ is $P_3$-free and together with Claim~\ref{cl:11} this implies that
$G - U$ contains at most one edge. Moreover, if $a$ or $b$ is even, Claim~\ref{cl:pt} implies that
$G - U$ is $P_2$-free. Hence $G - U$ contains no edges if $a$ or $b$ is even and contains at most one
edge if both $a$ and $b$ are odd.
Therefore, $G$ is a subgraph of $G_F(n)$.
\end{proof}

Now we are ready to prove Theorem~\ref{thm:paths}.

\begin{proof}[Proof of Theorem~\ref{thm:paths}.]
We proceed the proof by induction on $k$, the number of paths in $F$. The base case, $k=2$, is given by Theorem~\ref{2-paths}. Assume it is true for $1\le \ell\le k-1$. Pick $j$ such that $1\leq j\leq k$ and $F'=F-P^j$
is not $(k-1)\cdot P_3$. Let $G$ be an extremal graph for $F$. Take a $P^j$ in $G$, by Claim~\ref{claim-large-neighbor}, it contains a set $U_j$ of $\lfloor v_j/2\rfloor$ vertices, whose vertices have many common neighbors. Similarly, $G-U_j$ has to be $F'$-free, then by induction hypothesis $G-U_j\simeq G_{F'}$, thus $G\subseteq G_F(n)$. Hence $G_F(n)$ is the unique extremal graph.
\end{proof}


\section{Star forests}\label{sec:stars}

In this section we give a proof of Theorem~\ref{thm:stars}.

Let $F=\bigcup_{i=1}^k S^i$ be a star forest as in the statement of Theorem~\ref{thm:stars}.
Let $d_i$ be the maximum degree of $S^i$. Recall that $d_1\ge d_2\ge \cdots\ge d_k$.

We begin by describing the extremal graph for $F$.
Let $F(n,i)$ be a graph obtained by adding a set $U$ of $i-1$ universal vertices to an extremal graph, $H$, for $S^i$ on $n-i+1$ vertices.
See Figure~\ref{fig-sf}. Observe that $H$ is a $(d_i-1)$-regular graph if one of
$(d_i-1)$ and $n-i+1$ is even and $n$ large enough (see \cite{Simonovits1966}). If both are odd, then $H$ has exactly one vertex of degree $(d_i-2)$ and the remaining vertices have degree $(d_i-1)$. Therefore we have $e(H) = \left\lfloor\frac{d_i-1}{2}(n-i+1)\right\rfloor$.

\useplain{\begin{figure}[ht]}
\begin{center}
\useplain{\includegraphics{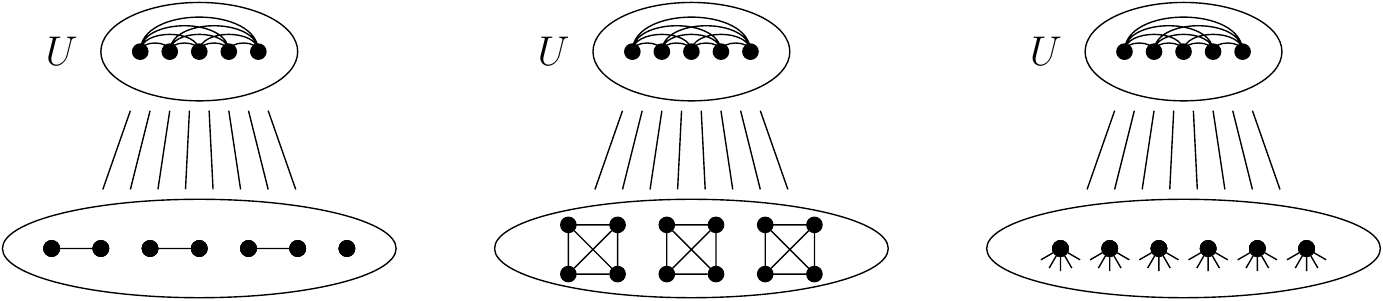}}
\end{center}
\caption{The extremal graph for  a star-forest.}
\label{fig-sf}
\end{figure}

Observe that for all $1\le i\le k$,  $F(n,i)$ is $F$-free. Indeed, each star $S^1,\ldots, S^{i-1}$ must have at least one vertex from $U$ and $S^i$ is not a subgraph of $F(n,i)-U$.

Throughout the proof, unless otherwise specified, $i$ is always the index maximizing the number of edges.
Notice that $e(F(n,i))$ is equal to the Tur\'an number claimed by the theorem, i.e.

$$e(F(n,i))=\left \lfloor \left(i-1+\frac{d_i-1}{2}\right)n-\frac{i-1}{2}(i+d_i-1) \right \rfloor.$$

Define $f_i=i-1+\frac{d_i-1}{2}$, namely $f_i$ is the coefficient of $n$, the leading term of $e(F(n,i))$.

\begin{cl}\label{claim1}
For any $j<i$, $f_j<f_i$.
\end{cl}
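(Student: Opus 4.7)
The plan is a direct proof by contradiction that exploits the maximality of $i$. Since $i$ maximizes $e(F(n,\cdot))$ we have $e(F(n,j))\le e(F(n,i))$ for every $j$; I will suppose that $f_j\ge f_i$ for some $j<i$ and derive $e(F(n,j))>e(F(n,i))$, splitting according to whether the hypothesis is strict.

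The easy subcase is $f_j>f_i$. By the closed form $e(F(n,i))=\lfloor f_i n-\tfrac{1}{2}(i-1)(i+d_i-1)\rfloor$ recorded immediately above the claim, the coefficient of $n$ in $e(F(n,\ell))$ is exactly $f_\ell$. Hence for $n$ large enough the linear-in-$n$ term dominates and $e(F(n,j))>e(F(n,i))$, contradicting the choice of $i$.

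The substantive subcase is $f_j=f_i$. From $f_\ell=\ell-1+\tfrac{d_\ell-1}{2}$, the equality $f_j=f_i$ combined with $j<i$ forces $d_j=d_i+2(i-j)$, which is consistent with the given monotonicity $d_j\ge d_i$. Now $f_j n=f_i n$ as real numbers, so $e(F(n,j))-e(F(n,i))$ equals, up to floor corrections, the gap in the constant terms $\tfrac{1}{2}\bigl[(i-1)(i+d_i-1)-(j-1)(j+d_j-1)\bigr]$. Substituting $d_j-d_i=2(i-j)$ and expanding should yield the clean identity
$$(i-1)(i+d_i-1)-(j-1)(j+d_j-1)=(i-j)(i-j+d_i),$$
which is at least $2$ whenever $d_i\ge 1$. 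Therefore $e(F(n,j))\ge e(F(n,i))+1$, again contradicting maximality.

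The main, though mild, obstacle is carrying out the algebra of the boxed identity cleanly and checking that the floor brackets do not swallow a constant-term gap of size at least $1$. The latter is immediate: since $f_i n$ and $f_j n$ agree as real numbers, $\lfloor x+(c_i-c_j)\rfloor\ge\lfloor x\rfloor+\lfloor c_i-c_j\rfloor\ge\lfloor x\rfloor+1$ whenever $c_i-c_j\ge 1$, so the strict gap survives taking floors. Once these two routine verifications are done the claim follows.
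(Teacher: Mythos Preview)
Your proof is correct and follows essentially the same approach as the paper: both split into the cases $f_j>f_i$ (handled by the linear term dominating) and $f_j=f_i$ (where $d_j=d_i+2(i-j)$ is deduced and the constant-term gap is computed to be $\tfrac{1}{2}(i-j)(i-j+d_i)$). Your treatment of the floor correction is slightly more explicit than the paper's, which simply absorbs it as a $-1/2$ term, but the substance is identical.
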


\begin{proof}
Suppose not. If $f_j>f_i$, then $j$ would be the index maximizing the number of edges, contradiction. Thus assume $f_j=f_i$, we are done if $e(F(n,j))-e(F(n,i))>0$, because this again contradicts the choice of $i$. Since $f_j=f_i$, the leading terms cancel. Furthermore $i-1+\frac{d_i-1}{2}=j-1+\frac{d_j-1}{2}$ implies
\begin{align}
d_j &=d_i+2(i-j). \label{eq:dj}
\end{align}
Thus we have:
\begin{align*}
e(F(n,j))-e(F(n,i)) &\geq \frac{i-1}{2}(i+d_i-1) - 1/2-\frac{j-1}{2}(j+d_j-1)\\
&= \frac{i-1}{2}(i+d_i-1) - 1/2-\frac{j-1}{2}(j+d_i+2(i-j)-1) && \text{  by \eqref{eq:dj}} \\
&= \frac{i-j}{2}(d_i+i-j)  - 1/2\\
&> 0\\
\end{align*}
\end{proof}

We use induction on $k$, the number of components of $F$. We prove the theorem in three cases distinguished by the index $i$ that maximizes the number of edges: (1) $i=k$, (2) $i\neq k$ and $i\neq 1$, and (3) $i=1$.

\noindent \textbf{Case (1)}: $i=k$

Let $F'=F-S^k$. Let $G$ be an extremal graph for $F$ with $n$ vertices. By the induction hypothesis, we have $\ex(n,F')=e(F(n,i'))$, where $i'$ is the index maximizing the number of edges. Since $i=k$, we have $i'<i$ and then by Claim~\ref{claim1}, thus
\begin{align}f_{i'}&<f_i=f_k. \label{eq:fi}\end{align}
Note that $F(n,i)$ is $F$-free, we have
$$\ex(n,F')=f_{i'}n+O(d_{i'})<f_kn+O(d_k)=e(F(n,k))\le e(G).$$
Thus $F'=S^1\cup S^2\cup \cdots\cup S^{k-1}\subseteq G$ by induction hypothesis.

It suffices to prove that there exists a vertex subset $U\subseteq V(G)$ of order $k-1$, such that every vertex in $U$ has linear degree, that is, $d(v)=\Omega(n)$. Indeed, if such a $U$ exists, then $G-U$ has to be $S^k$-free. Otherwise, say there is a $S^k$ in $G-U$, then we can get $F'$ using vertices in $U$ as centers and their neighbors in $G-U-S^k$ as leaves, which gives a copy of $F'\cup S^k=F$, which is a contradiction. 
Hence $G \subseteq F(n,k)$ as desired.

Now we prove such a $U$ exists. We know $F'\subseteq G$, namely there are $k-1$ disjoint stars in $G$. Take any one of them, say $S^j$, $1\le j\le k-1$. 
Notice that $G-S^j$ has to be $F'$-free, since otherwise a copy of $F'$ in $G-S^j$ together with $S^j$ yields a copy of $S^1\cup S^2\cup\cdots\cup S^{k-1}\cup S^j$.
Since $S^k\subseteq S^j$ we get that $F\subseteq G$, which is a contradiction. 
 Note that $e(G[S^j])\le {d_j+1\choose 2}$ and $e(G)\ge e(F(n,k))$. Let $e_{0}$ be the number of edges between $S^j$ and $G-S^j$. Then we have
\begin{align*}
e_{0}&= e(G)-e(G-S^j)-e(G[S^j])\\
&\ge e(F(n,k))-\ex(n,F')-{d_j+1\choose 2}\\
&\sim f_kn-f_{i'}n=\Omega(n) && \text{  by \eqref{eq:fi}.}\\
\end{align*}

Thus there is a vertex in $S^j$ with linear degree. Since this is true for every $j$ with $1\le j\le k-1$, take the one with linear degree from each star, these $k-1$ vertices form the desired set $U$.

This finishes the proof of Case (1).

\noindent \textbf{Case (2):} $i\neq k$ and $i\neq 1$

Let $F^*=S^i\cup S^{i+1}\cup\cdots\cup S^k$ and $F'=F-F^*$. Similarly if $i'$ is the index maximizing the number of edges for $F'$, then $f_{i'}<f_i$ by Claim~\ref{claim1}.

\begin{cl}\label{claim2}
$\ex(n,F^*)=\left \lfloor \frac{d_i-1}{2}n \right \rfloor$.
\end{cl}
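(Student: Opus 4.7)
The plan is to obtain Claim~\ref{claim2} from the main Theorem~\ref{thm:stars} applied to the sub-forest $F^*=S^i\cup\cdots\cup S^k$ itself. Because Case~(2) assumes $i\geq 2$, $F^*$ has $k-i+1\leq k-1$ components, so the outer induction on $k$ supplies the theorem for $F^*$. The lower bound $\ex(n,F^*)\geq \lfloor (d_i-1)n/2\rfloor$ is immediate: any graph on $n$ vertices with maximum degree at most $d_i-1$ is $S^i$-free, hence $F^*$-free, and such a graph with exactly $\lfloor (d_i-1)n/2\rfloor$ edges exists (for instance a near-$(d_i-1)$-regular graph).

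For the matching upper bound, viewing $F^*$ as a star forest in its own right with components of maximum degrees $d_i\geq d_{i+1}\geq\cdots\geq d_k$, the induction hypothesis yields
\[
\ex(n,F^*)=\max_{1\leq \ell\leq k-i+1}\left[(\ell-1)(n-\ell+1)+\binom{\ell-1}{2}+\left\lfloor \frac{d_{i+\ell-1}-1}{2}(n-\ell+1)\right\rfloor\right].
\]
The bracket at $\ell=1$ equals exactly $\lfloor (d_i-1)n/2\rfloor$, so it suffices to show that $\ell=1$ is a maximizer. The leading coefficient of the $\ell$-th bracket in $n$ is $(\ell-1)+(d_{i+\ell-1}-1)/2 = f_{i+\ell-1}-(i-1)$, which is at most $(d_i-1)/2=f_i-(i-1)$ because $i$ was chosen to maximize $f_j$ over $j\in\{1,\ldots,k\}$.

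When $f_{i+\ell-1}<f_i$, the $\ell$-th bracket is dominated by the $\ell=1$ bracket for $n$ large, so such $\ell$ can be ignored. The delicate case is equality $f_{i+\ell-1}=f_i$ for some $\ell\geq 2$, equivalently $d_{i+\ell-1}=d_i-2(\ell-1)$. Substituting, the $\ell$-th bracket simplifies to $\binom{\ell-1}{2}+\lfloor(d_i-1)(n-\ell+1)/2\rfloor$, and so
\[
e(F^*(n,\ell))-e(F^*(n,1))=\binom{\ell-1}{2}+\left\lfloor\frac{(d_i-1)(n-\ell+1)}{2}\right\rfloor-\left\lfloor\frac{(d_i-1)n}{2}\right\rfloor=\frac{(\ell-1)(\ell-1-d_i)}{2}+O(1),
\]
which is strictly negative for $\ell\geq 2$ provided $d_i\geq \ell$. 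This bound holds because $d_k\geq 1$ together with $d_k\leq d_i-2(k-i)$ (which follows from $f_k\leq f_i$) forces $d_i\geq 2(k-i)+1\geq k-i+1\geq \ell$. Hence $\ell=1$ is the maximizer and the claim follows.

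The main obstacle is the equality case $f_{i+\ell-1}=f_i$: here the leading $n$-terms cancel and one must perform a careful second-order comparison of the binomial and floor terms, in the same spirit as the tie-breaking step at the end of Claim~\ref{claim1}.
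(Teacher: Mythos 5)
Your proposal is correct and follows the same basic strategy as the paper: apply the induction hypothesis (on the number of components) to $F^*$, and get the lower bound from the observation that an $S^i$-free graph is $F^*$-free. The only real difference is in the upper bound. You carry out a full term-by-term comparison of the brackets, including a second-order analysis of the tie case $f_{i+\ell-1}=f_i$; the paper short-circuits all of this by simply discarding the nonnegative subtracted term $\lfloor\frac{i^*-1}{2}(i^*+d_{i^*}-1)\rfloor$, which immediately gives $\ex(n,F^*)\le\bigl(\ell-i+\frac{d_\ell-1}{2}\bigr)n\le\frac{d_i-1}{2}n$ from $f_\ell\le f_i$, and then integrality of $\ex(n,F^*)$ together with the lower bound $\ge\lfloor\frac{d_i-1}{2}n\rfloor$ forces equality. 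So the delicate tie-breaking you flag as the main obstacle is avoidable here (unlike in Claim~\ref{claim1}, where the exact constant matters). One small imprecision in your version: in the tie case the difference need only be shown non-positive, not strictly negative --- your bound $\frac{(\ell-1)(\ell-1-d_i)}{2}+O(1)$ with $d_i\ge\ell$ guarantees $\le 0$ but can equal $0$ (e.g.\ $\ell=2$, $d_i=2$, depending on parities); this does not affect the conclusion, since a tie still leaves the maximum equal to $\lfloor\frac{d_i-1}{2}n\rfloor$. Your derivation of $d_i\ge\ell$ from $f_k\le f_i$ and $d_k\ge1$ is correct.
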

\begin{proof}
Since $i$ is the index maximizing the number of edges for $F$, thus for any $\ell>i$, $f_{\ell}\le f_{i}$, or $\ell-1+\frac{d_{\ell}-1}{2}\le i-1+\frac{d_i-1}{2}$, this implies
\begin{align}
\ell-i+\frac{d_{\ell}-1}{2}\le \frac{d_i-1}{2}. \label{eq:ell}
\end{align}
And by the induction hypothesis we have,
\begin{align*}
\ex(n,F^*)&=\max_{1\le i^*\le k-i+1}\left\{\left[i^*-1+\frac{d_{i^*}-1}{2}\right]n-\left\lfloor\frac{i^*-1}{2}(i^*+d_{i^*}-1)\right\rfloor\right\}\\
&\le \max_{1\le i^*\le k-i+1}\left\{\left[i^*-1+\frac{d_{i^*}-1}{2}\right]n\right\}\\
&= \max_{i\le \ell\le k}\left\{\left[\ell-i+\frac{d_{\ell}-1}{2}\right]n\right\}\\
&\le \frac{d_i-1}{2}n && \text{  by \eqref{eq:ell}.} \\
\end{align*}
On the other hand, a $S^i$-free graph is $F^*$-free, thus $\ex(n,F^*)\ge \left \lfloor \frac{d_i-1}{2}n \right \rfloor$.
\end{proof}

Let $G$ be an extremal graph for $F$ on $n$ vertices. Recall $F'=S^1\cup\cdots\cup S^{i-1}$, the choice of $i$ and the extremality of $G$ implies $e(G)\ge e(F(n,i))>\ex(n,F')$. Thus $F'\subseteq G$. As before if we can show there is a vertex with linear degree in each star in $F'\subseteq G$, then we have a set $U$ of order $i-1$, each vertex of which has linear degree and $G-U$ is $F^*$-free. Then by Claim~\ref{claim2} we get $G \subseteq F(n,i)$.

Take any star in a copy of $F'$ in $G$, say $S^j$, note that $j<i$. We take the same approach as before to prove that the number of edges between $S^j$ and $G-S^j$ is linear, namely $e_{0}=\Omega(n)$, which implies the existence of a vertex with linear degree as desired.

Note that $G-S^j$ must be $(F-S^j)$-free. We now give an upper bound on $e(G-S^j)$. Let $F''=F-S^j$  and let $i''$ be the index maximizing the number of edges for $F''$.

If $i''<j<i$, then by Claim~\ref{claim1}, $f_{i''}<f_i$ and thus $$e(G-S^j)\le \ex(n,F'')\le f_{i''}n.$$
Therefore $e_{0}=e(G)-e(G[S^j])-e(G-S^j)\ge f_in-f_{i''}n-{d_j+1\choose 2}=\Omega(n)$ as desired.

Notice that in $F''$, all indices after $j$ were shifted to the left by one, that is $F''=S^1\cup\cdots\cup S^{j-1}\cup S^{j+1}\cup\cdots\cup S^k$. Thus if $i''\ge j$, by the definition of $f_i$, then it is the same index as $F$ that maximizes the number of edges for $F''$, that is $i$ in $F$ and $i-1$ in $F''$.

Thus $i''=i-1$. In this case $e(G-S^j)\le [(i-1)-1]+\frac{d_i-1}{2}=f_i-1$. Thus $e_{0}\ge f_in-f_{i''}n-{d_j+1\choose 2}= n-{d_j+1\choose 2}=\Omega(n)$ as desired.

This finishes the proof of Case(2).

\noindent \textbf{Case (3):} $i=1$.

Let $G$ an extremal graph for $F$. We want to show: $e(G)=\ex(n,F)=\left \lfloor \frac{d_1-1}{2}n \right \rfloor$. Since an $S^1$-free graph is $F$-free, $e(G)\ge \left \lfloor \frac{d_1-1}{2}n\right \rfloor$.

We may assume $\Delta(G)\ge d_1$, since otherwise $e(G) \leq \frac{d_1-1}{2}n$. Let $v$ be a vertex of degree $\Delta(G)$, so $d(v)\ge d_1$. Thus we can get a $S^1$ from $N(v)\cup\{v\}$ with $v$ as its center. Note that since $i=1$, we have for any $j>1$, $f_j\le \frac{d_1-1}{2}$. Let $F^*=F-S^1=S^2\cup\cdots\cup S^k$. Note that in $F^*$, all indices were shifted to the left by one. Hence if $i^*$ is the index maximizing the number of edges for $F^*$, then it was $j=i^*+1\ge 2$ in $F$. Thus $f_{i^*}=[(j-1)-1]+\frac{d_j-1}{2}=f_{j}-1\le \frac{d_1-1}{2}-1$.

Let $e_{0}$ be the number of edges between $S^1$ and $G-S^1$, then $e_{0}=e(G)- e([S^1])-e(G-S^1)\ge \frac{d_1-1}{2}-{d_1+1\choose 2}-f_{i^*}n\ge n-{d_1+1\choose 2}=\Omega(n)$. Thus there is a vertex of linear degree in $S^1$, let it be $u$. This implies $G-\{u\}$ is $F^*$-free. Thus
$$e(G)=d(u)+e(G-u)\le n-1+\ex(n,F^*)\le n-1+f_{i^*}n\le \frac{d_1-1}{2}n-1<e(F(n,1)),$$
which is a contradiction.

This finishes the proof of Case (3) and hence also of Theorem~\ref{thm:stars}.
\qed


\section{Forests with components of order 4}

Now we consider the forest whose components are all of order 4. Notice that there are only two trees of order 4: the path $P_4$ and the star $S_3$. Let $F=a\cdot P_4 \cup b\cdot S_3$. Let $G_F^1(n)$ be the $n$-vertex graph constructed as follows: assume $n-b =3d+r$ with $r\le 2$, $G_F^1(n)$ contains $b$ universal vertices, the remaining graph is $K_r \cup d\cdot K_3$. Let $G_F^2(n)$ be the $n$-vertex graph containing $2a+b-1$ universal vertices and the remaining graph is empty. See Figure~\ref{fig-sp4}.
It is easy to check that both $G_F^1(n)$ and $G_F^2(n)$ are $F$-free as the set of universal vertices is too small.

\useplain{\begin{figure}[ht]}
\begin{center}
\includegraphics{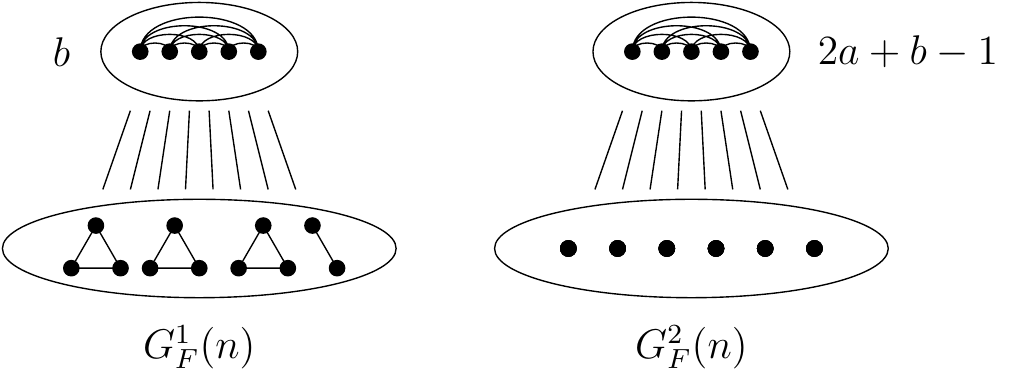}
\end{center}
\caption{The extremal graphs for  a forest with components of order 4.}
\label{fig-sp4}
\end{figure}

\begin{theorem}\label{order-4-case}
Given $F=a\cdot P_4 \cup b\cdot S_3$, and $n$ is sufficiently large and assume $n=3d+r$ with $r\le 2$, then
\begin{enumerate}
\renewcommand{\theenumi}{(\roman{enumi})}
\item If $a=1$ and $r=0$, then $G_F^1(n)$ is the unique extremal graph; if $a=1$ and $r\neq 0$, then $G_F^1(n)$ and $G_F^2(n)$ are the only extremal graphs.
\item If $a>1$, then $G_F^2(n)$ is the unique extremal graph.
\end{enumerate}
\end{theorem}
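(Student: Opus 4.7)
The plan is to adapt the paper's established strategy---reduce to large minimum degree, extract a set $U$ of near-universal vertices, analyze $G-U$---to this mixed forest. Let $G$ be an extremal $F$-free graph on $n$ vertices (with $n$ sufficiently large), so $e(G) \ge \max\{e(G_F^1(n)),\, e(G_F^2(n))\}$. A direct calculation gives $e(G_F^2(n)) - e(G_F^1(n)) = r - 1 - \binom{r}{2}$ in Case (i) and $(2a-2)n + O(1)$ in Case (ii), matching the dichotomy claimed in the theorem.

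First I reduce to $\delta(G) \ge \delta^*$, where $\delta^* = 2a+b-1$ in Case (ii) and $\delta^* = b$ in Case (i). The reduction mirrors the derivation of Theorem~\ref{2-paths} from Lemma~\ref{large-minimum-degree}: iteratively delete a vertex of degree below $\delta^*$, noting that the edge deficit of $G$ relative to the candidate extremal graph strictly grows, so the process terminates at a graph of minimum degree at least $\delta^*$ or yields a contradiction to extremality. Let $U$ be the set of vertices with degree at least a linear threshold $\Theta(n)$. I claim $|U| \le 2a+b-1$: if instead $|U| \ge 2a+b$, greedily embed $F$ in $G$ by taking $a$ disjoint edges (or ``cherries'' with a common non-$U$ neighbor) inside $U$ as the middle edges of the $P_4$s and $b$ further vertices of $U$ as centers of the $S_3$s, then assigning the $2a$ path endpoints and $3b$ star leaves from the $\Omega(n)$-sized (common) neighborhoods. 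A standard swap-extremality argument upgrades each $u \in U$ to a genuine universal vertex.

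In Case (ii) the linear edge-count gap forces $|U| = 2a+b-1$ and $G-U$ edgeless, giving $G = G_F^2(n)$ uniquely. In Case (i) the essentially equal edge counts allow $|U| \in \{b, b+1\}$: if $|U| = b+1$, the Case (ii) argument produces $G = G_F^2(n)$; if $|U| = b$, I argue that $G-U$ is simultaneously $P_4$-free and $S_3$-free. A $P_4$ inside $G-U$ combined with $b$ stars centered at $U$ would give $F \subseteq G$; an $S_3$ inside $G-U$ combined with a one-universal $P_4$ whose middle edge lies in $G-U$ disjoint from the $S_3$, plus $b-1$ stars centered at the remaining universals, would similarly give $F \subseteq G$. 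Hence every component of $G-U$ belongs to $\{P_1, P_2, P_3, C_3\}$, and maximizing the edge count over such configurations on $n-b$ vertices yields $G-U = K_r \cup d \cdot K_3$, i.e., $G = G_F^1(n)$.

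The main obstacle will be the $S_3$-freeness step in the $|U| = b$ sub-case: producing the disjoint one-universal $P_4$ after setting aside the supposed $S_3$. This rests on combining the extremal lower bound $e(G-U) = \Omega(n)$ with the upper bound $o(n)$ on the degree of any vertex in $V(G) \setminus U$, so that only $o(n)$ edges of $G-U$ are incident to any fixed four vertices, leaving $\Omega(n)$ disjoint edges available for the $P_4$. Once this is in place, the remainder is routine edge counting together with a short uniqueness check for the $P_4$- and $S_3$-free extremal graph on $n-b$ vertices, which pins down the $K_r \cup d \cdot K_3$ structure and produces the claimed classification of extremal graphs by the value of $r$.
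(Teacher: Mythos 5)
Your route is genuinely different from the paper's. The paper argues by induction on $b$: it finds a copy of $S_3$ in $G$, uses the edge count (exactly as in the star-forest proof) to show that this $S_3$ contains a vertex $v$ of degree $\Omega(n)$, observes that $G-\{v\}$ must be $\bigl(a\cdot P_4\cup(b-1)\cdot S_3\bigr)$-free, and applies the induction hypothesis; the base case $a\cdot P_4$ is supplied by Faudree--Schelp when $a=1$ (whence the triangles in $G_F^1$) and by Theorem~\ref{disjoint-union-path} when $a>1$. You instead attempt a one-shot structural analysis via the set $U$ of linear-degree vertices. The endgame of your plan is sound: once one knows $|U|=b$, that $G-U$ is $P_4$-free and $S_3$-free, and that $e(G[U])+e(U,G-U)\le\binom{b}{2}+b(n-b)$, the count $e(G-U)\ge (n-b)-r+\binom{r}{2}$ forces $G-U=K_r\cup d\cdot K_3$ and all inequalities tight, which is exactly the $r$-dichotomy.

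However, several intermediate steps are asserted in a form that does not hold. First, ``a standard swap-extremality argument upgrades each $u\in U$ to a genuine universal vertex'' is not a valid step: adding edges at $u$ can create a copy of $F$, and there is no generic reason the augmented graph stays $F$-free. (The fix is to never upgrade: show $G$ is a \emph{subgraph} of $G_F^1(n)$ or $G_F^2(n)$ and let extremality force equality, as the paper does elsewhere.) Second, your embedding when $|U|\ge 2a+b$ relies on ``edges or cherries inside $U$,'' but $2a+b$ vertices of degree $\varepsilon n$ need not span any edge nor have any common neighbours; you must instead route each $P_4$ through a single vertex of $U$ using the minimum-degree condition, which changes the bookkeeping. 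Third, and most importantly, nothing in the proposal establishes the \emph{lower} bound $|U|\ge b$ (resp.\ $|U|\ge 2a+b-1$): knowing only that vertices outside $U$ have degree $<\varepsilon n$ gives $e(G-U)=O(\varepsilon n^2)$, not $O(n)$, so the edge count does not close. Producing these $b$ (or $2a+b-1$) disjoint linear-degree vertices is precisely what the paper's induction delivers, one $S_3$ at a time, and your argument needs an equivalent of it. Finally, in the $S_3$-freeness step the real difficulty is not that $o(n)$ edges meet the four vertices of the putative $S_3$, but that the $\Omega(n)$ surviving edges of $G-U$ must contain one with an endpoint adjacent to a still-unused vertex of $U$; if those edges concentrate in a few large stars (or avoid $N(U)$) this requires first invoking the $P_4$-freeness structure of $G-U$ together with the minimum-degree condition. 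Each of these gaps is fillable, but as written the proof does not go through.
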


We only give a sketch of the proof. Use induction on $b$, the number of copies of $S_3$. Let $G$ be an extremal graph with $n$ vertices, then $e(G)\ge e(G_F^i(n))$, $i=1,2$. 
Thus $G$ contains a copy of $S_3$, similar as the proof for star-forest, any copy of $S_3$ in $G$ contains a vertex, say $v$, of degree $\Omega(n)$. 
Then $G-\{v\}$ must be $F'$-free where $F'=a\cdot P_4 \cup (b-1)\cdot S_3$. Then by the inductive hypothesis, $G-\{v\}$ has to be $G_{F'}^i(n-1)$, which then implies $G\subseteq G_F^i(n)$. 
The base case of the induction is when $b=0$, then the graph is $a \cdot P_4$. This also explains why we have two different constructions for the extremal graphs. 
Because when $a=1$, by a result of Faudree and Schelp~\cite{F-S}, vertex disjoint copies of triangles or a star are the extremal graphs (actually, they showed a combination of triangles and a smaller star is also an extremal case, but here, if any triangle appears, then the number of universal vertices will be fewer which yields a construction worse than $G_F^1$ and $G_F^2$); when $a>1$, then Theorem~\ref{disjoint-union-path} (or Theorem~\ref{thm:paths}) implies that $G_F^2(n)$ is the unique extremal graph.

The same technique can be applied on $a\cdot P_{\ell} \cup b\cdot S_t$ but the proof is very technical.

\section*{Acknowledgments}
We would like to thank J\'ozsef Balogh, Alexandr Kostochka and Douglas B. West for encouragement
and fruitful discussions.

\end{document}